\documentclass[12pt]{article}
\usepackage[english]{babel}

\usepackage{amsmath}
\usepackage{amssymb,amsthm}

\usepackage{mathrsfs}

\usepackage{bbm}

\tolerance=1000

\numberwithin{equation}{section}

\newtheorem{theorem}{Theorem}

\newtheorem{lemma}{Lemma}[section]

\newtheorem{remark}[lemma]{Remark}

\newtheorem{notation}[lemma]{Notation}
\newtheorem{definition}[lemma]{Definition}

\renewcommand{\leq}{\leqslant}
\renewcommand{\geq}{\geqslant}

\newcommand{\Rot}{\mathcal{R}}

\makeatletter
\newcommand*{\IfItalic}{%
  \ifx\f@shape\my@test@it
    \expandafter\@firstoftwo
  \else
    \expandafter\@secondoftwo
  \fi
}
\newcommand*{\my@test@it}{it}
\makeatother

\newcommand{\myae}{\IfItalic{\emph{\mbox{\ae}}}{\mbox{\ae}}}

\textwidth=17cm \oddsidemargin=-5mm \topmargin=-30mm
\textheight=26cm



\newcommand\lI{L}

\newcommand\ssemg\psi
\newcommand\fkinkg\gamma
\newcommand\iks[1]{\widehat{x}_{#1}}
\newcommand\iksp[1]{\widehat{x}_{#1}^{+}}
\newcommand\ikspm[1]{\widehat{x}_{#1}^{\, \pm}}

\usepackage[all]{xy}

\usepackage{amscd}

\begin{document}

\begin{center}
{\Large On one generalization of skew tent maps}

{\large Makar Plakhotnyk\\
University of S\~ao Paulo, Brazil~\footnote{This work is partially
supported by FAPESP (S\~ao Paulo,
Brazil).}.\\

makar.plakhotnyk@gmail.com}\\
\end{center}

\begin{abstract}
We generalize in this work the properties of the conjugacy of skew
tent maps. It is known that the conjugacy $h$ from a skew tent map
$g_1$ to $g_2$ is differentiable at a point $x^*$ if and only if
there exists left and right limits $\lim\limits_{n\rightarrow
\infty}h_n'(x^*-)$ and $\lim\limits_{n\rightarrow
\infty}h_n'(x^*+)$, where $h_n$ is a piecewise linear function,
which coincides with $h$ at $g_1^{-n}(0)$, and all whose kinks
belong to $g_1^{-n}(0)$. The attempts to generalize this result to
some reacher class of unimodal maps is natural. For this reason we
introduce the class of piecewise linear maps, all whose kinks are
in the complete pre-image of $0$ and study the relation of the
differential properties of their conjugacy with ones of the
mentioned approximation~$(h_n)_{n\geq 1}$.~\footnote{AMS subject
classification:
37E05  
}
\end{abstract}

\section{Introduction}


The final aim of the theory of the motions of dynamical systems
must be directed toward the qualitative determination of all
possible types of motions and of the interrelation of these
motions~\cite[p. 189]{Birk-1927}. Topological conjugation is the
classical tool to divide the dynamical systems to classes of
equivalence, where all the trajectories are the same in a certain
cense. S. Ulam obtained in~\cite{Ulam-1964-b} the topological
conjugacy $x\mapsto \frac{2}{\pi}\arcsin\sqrt{x}$ of the
\textbf{tent} map $f:\, x\mapsto 1-|1-2x|$ and the logistic map
$x\mapsto 4x(1-x)$, which are defined on $x\in [0, 1]$. Due to the
simplicity and intuitive unexpectedness of this result, it rapidly
became a part of the scientific folklore and the inevitable
example of any textbook on the Dynamical Systems theory.

We will call a continuous map $g: [0, 1]\rightarrow [0, 1]$
\textbf{unimodal}, if it can be written in the form
\begin{equation}\label{eq:1.1} g(x) = \left\{
\begin{array}{ll}g_l(x),& 0\leq x\leq
v,\\
g_r(x), & v\leq x\leq 1,
\end{array}\right.
\end{equation} where %
$v\in (0,\, 1)$ is a parameter, the function $g_l$ increase, the
function $g_r$ decrease, and
$$g(0)=g(1)=1-g(v)=0.$$

If a homeomorphism $h$ satisfies the functional equation $ h\circ
g_1 = g_2\circ h,$ where $g_1$ and $g_2$ are unimodal maps of the
form~\eqref{eq:1.1}, then we will say that $h$ is a
\textbf{conjugation from $g_1$ to~$g_2$}.

\begin{theorem}\label{th:1}\cite[p. 53]{Ulam-1964-b} %
A unimodal map $g$ is topologically conjugated to the tent map if
and only if the complete pre-image of $0$ under the action of $g$
is dense in $[0,\, 1]$.
\end{theorem}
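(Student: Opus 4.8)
The plan is to prove the two implications separately; the ``only if'' part is short, and essentially all the work sits in the ``if'' part, where a conjugacy must be manufactured out of nothing but the density hypothesis.

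For the ``only if'' direction, suppose $h$ is a homeomorphism of $[0,1]$ with $h\circ f=g\circ h$, where $f$ denotes the tent map. Since $f(0)=0$, the point $h(0)$ is a fixed point of $g$; being an endpoint of $[0,1]$ it equals $0$ or $1$, and $g(1)=0\neq 1$ forces $h(0)=0$ (hence $h(1)=1$ and $h$ is increasing). Iterating the conjugacy gives $h\circ f^n=g^n\circ h$, so $f^n(x)=0\iff g^n(h(x))=h(f^n(x))=0$, i.e.\ $h$ carries $f^{-n}(0)$ bijectively onto $g^{-n}(0)$ for every $n$, and therefore carries the complete pre-image $P_f:=\bigcup_n f^{-n}(0)$ onto $P_g:=\bigcup_n g^{-n}(0)$. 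Since $f^{-n}(0)=\{k\,2^{-(n-1)}:0\le k\le 2^{n-1}\}$, the set $P_f$ is the set of dyadic rationals, dense in $[0,1]$; a homeomorphism preserves density, so $P_g$ is dense.

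For the ``if'' direction, I would first record the combinatorial skeleton. Because $g$ has the form~\eqref{eq:1.1}, the restrictions $g_l\colon[0,v]\to[0,1]$ and $g_r\colon[v,1]\to[0,1]$ are strictly monotone homeomorphisms, with inverse branches $\phi_0:=g_l^{-1}\colon[0,1]\to[0,v]$ (increasing) and $\phi_1:=g_r^{-1}\colon[0,1]\to[v,1]$ (decreasing), and the tent map has branches $\psi_0(x)=x/2$ and $\psi_1(x)=1-x/2$. An easy induction gives $g^{-1}(0)=\{0,1\}$, $g^{-(n+1)}(0)=\phi_0(g^{-n}(0))\cup\phi_1(g^{-n}(0))$, $g^{-n}(0)\subset g^{-(n+1)}(0)$, and $|g^{-n}(0)|=2^{n-1}+1=|f^{-n}(0)|$ (the count comes from $|g^{-1}(y)|=2$ for $y\in[0,1)$ and $|g^{-1}(1)|=1$, i.e.\ $|g^{-(n+1)}(0)|=2|g^{-n}(0)|-1$). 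Hence for each $n$ there is a unique increasing bijection $h_n\colon f^{-n}(0)\to g^{-n}(0)$, and the inverse-branch presentation lets me build it recursively: $h_1=\mathrm{id}_{\{0,1\}}$ and $h_{n+1}(\psi_i(x))=\phi_i(h_n(x))$ for $x\in f^{-n}(0)$, $i\in\{0,1\}$. The two clauses are consistent at the overlap $1/2=\psi_0(1)=\psi_1(1)$ since $\phi_0(1)=\phi_1(1)=v$, and induction shows the $h_n$ are nested, increasing, and fix $0$ and $1$.

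Then I would glue and intertwine. Put $h:=\bigcup_n h_n$ on $P_f$: this is an increasing bijection $P_f\to P_g$ fixing the endpoints, with $P_f$ dense and $P_g$ dense by hypothesis, so the standard fact that an order isomorphism between two dense subsets of $[0,1]$ containing the endpoints extends uniquely to an increasing homeomorphism $\bar h\colon[0,1]\to[0,1]$ applies. To check $\bar h\circ f=g\circ\bar h$, note that on $P_f$ it is just an unwinding of the recursion: if $x\in f^{-n}(0)$ lies in branch $i$ then $x=\psi_i(f(x))$ with $f(x)\in f^{-(n-1)}(0)$, so $h(x)=\phi_i(h(f(x)))$, and applying $g$ together with $g\circ\phi_i=\mathrm{id}$ yields $g(h(x))=h(f(x))$, that is $\bar h(f(x))=g(\bar h(x))$. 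Both sides are continuous on $[0,1]$ and agree on the dense set $P_f$, hence everywhere, so $\bar h$ conjugates $f$ to $g$. I expect the main obstacle to be the combinatorial step: pinning down that $g^{-n}(0)$ has exactly $2^{n-1}+1$ points with the self-similar nesting, which is where unimodality and the strict monotonicity of the branches are genuinely used — without $g_l,g_r$ being homeomorphisms onto $[0,1]$ the cardinality match fails — and one should also observe that the density hypothesis itself excludes flat pieces of $g$, so that this strict-monotonicity assumption is harmless. The extension from a dense set and the intertwining verification are then routine.
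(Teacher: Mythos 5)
The paper does not prove this theorem at all --- it is quoted from Ulam's book --- so there is no in-paper argument to compare yours against. Your proof is the standard one and is essentially correct: the ``only if'' direction via $h(f^{-n}(0))=g^{-n}(0)$ and density of the dyadic rationals, and the ``if'' direction via the recursively defined order isomorphism $h_{n+1}(\psi_i(x))=\phi_i(h_n(x))$ between the finite level sets, glued over $P_f$ and extended by monotonicity and density to a homeomorphism that intertwines $f$ and $g$ on a dense set and hence everywhere. This is also exactly the combinatorial skeleton the paper itself relies on later: Remark~\ref{rem:1.1} is your cardinality count $|g^{-n}(0)|=2^{n-1}+1$, and Lemma~\ref{lema:1.3} is the statement that any conjugacy must agree with your $h_n$ on $f^{-n}(0)$, which also gives the uniqueness of the increasing conjugacy.

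One side remark of yours is false, although it does not affect the argument under the intended reading of ``unimodal.'' You claim that the density hypothesis by itself excludes flat pieces of $g$. It does not: take $v=1/2$, $g_r(x)=2-2x$, and $g_l$ equal to $2x$ on $[0,1/4]$, constant $1/2$ on $[1/4,3/8]$, and $4x-1$ on $[3/8,1/2]$. The plateau level $1/2$ lies in $g^{-2}(0)$, so the whole plateau sits inside $g^{-3}(0)$; outside the plateau $|g'|\geq 2$, so every interval eventually covers $v$ or meets the plateau and hence meets $g^{-\infty}(0)$, which is therefore dense. Yet this $g$ is not conjugate to the tent map, since $g^{-1}(1/2)$ contains an interval while the tent map has at most two preimages of any point. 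So the strict monotonicity of $g_l$ and $g_r$ (which your construction of the inverse branches $\phi_0,\phi_1$ genuinely needs) must be read into the definition of unimodal --- as the paper clearly intends, e.g.\ for Remark~\ref{rem:1.1} to hold --- rather than derived from the density hypothesis.
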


Recall, that the set $g^{-\infty}(a) = \bigcup\limits_{n\geq
1}g^{-n}(a)$, where $g^{-n}(a) = \left\{ x\in [0,\, 1]:\, g^n(x) =
a\right\}$ for all $n\geq 1$, is called the complete pre-image of
$a$ (under the action of the map $g$). We have made
in~\cite{Chaos} the following construction, being motivated by
Theorem~\ref{th:1}.

\begin{remark}\cite[Remark~2.1]{Chaos}\label{rem:1.1}
For any $n\geq 1$ the set $g^{-n}(0)$ consists of $2^{n-1}+1$
points.
\end{remark}

Due to Remark~\ref{rem:1.1}, the notation appears.

\begin{notation}\cite[Notation~2.1]{Chaos}
For every unimodal map $g:\, [0,\, 1]\rightarrow [0,\, 1]$ and for
every $n\geq 1$ denote $\{ \mu_{n,k}(g),\, 0\leq k\leq 2^{n-1}\}$
such that $g^n(\mu_{n,k}(g))=0$ and $\mu_{n,k}(g)<\mu_{n,k+1}(g)$
for all~$k$.
\end{notation}

\begin{lemma}\cite[Lemma 3]{Chaos}\label{lema:1.3}
Suppose that $g_1,\, g_2:\, [0,\, 1]\rightarrow [0,\, 1]$ are
unimodal maps, and $h$ is the conjugacy from $g_1$ to $g_2$. Then
$h(\mu_{n,k}(g_1)) =\mu_{n,k}(g_2)$ for all $n\geq 1$ and $k,\,
0\leq k\leq 2^{n-1}$.
\end{lemma}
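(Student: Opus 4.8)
The plan is to reduce everything to two facts about the conjugacy $h$: that $h$ is \emph{strictly increasing} (in particular $h(0)=0$), and that $h$ intertwines all the iterates, $h\circ g_1^{\,n}=g_2^{\,n}\circ h$. Granting these, the statement drops out almost immediately, so the bulk of the work is in establishing the first fact; everything afterwards is formal.

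First I would show $h$ is strictly increasing. Since $h$ is by definition a homeomorphism of $[0,1]$, it is strictly monotone and it permutes the endpoints, so either $h(0)=0,\ h(1)=1$, or $h(0)=1,\ h(1)=0$. Evaluating the conjugacy equation $h\circ g_1=g_2\circ h$ at $x=0$ and using $g_1(0)=0$ (one of the defining relations of a unimodal map) gives $h(0)=g_2(h(0))$. In the orientation-reversing case this would read $1=g_2(1)=0$, a contradiction; hence $h$ is increasing and $h(0)=0$ (and $h(1)=1$).

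Next, iterating $h\circ g_1=g_2\circ h$ gives $h\circ g_1^{\,n}=g_2^{\,n}\circ h$ for every $n\geq 1$ by a one-line induction. Combined with $h(0)=0$ and the injectivity of $h$, this yields, for each $x\in[0,1]$, the equivalences $g_1^{\,n}(x)=0\iff h\bigl(g_1^{\,n}(x)\bigr)=0\iff g_2^{\,n}(h(x))=0$; that is, $x\in g_1^{-n}(0)$ if and only if $h(x)\in g_2^{-n}(0)$. Since $h$ is a bijection of $[0,1]$ onto itself, it therefore restricts to a bijection of $g_1^{-n}(0)$ onto $g_2^{-n}(0)$.

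Finally, by Remark~\ref{rem:1.1} both $g_1^{-n}(0)$ and $g_2^{-n}(0)$ have exactly $2^{n-1}+1$ elements, enumerated in increasing order as $\mu_{n,0}<\dots<\mu_{n,2^{n-1}}$; since the bijection $h$ between them is strictly increasing, it must carry the $k$-th smallest element to the $k$-th smallest element, i.e.\ $h(\mu_{n,k}(g_1))=\mu_{n,k}(g_2)$ for all $0\leq k\leq 2^{n-1}$, which is the claim. I expect the only genuinely delicate point to be the very first step — ruling out the orientation-reversing case for $h$ — as this is where the unimodal normalization $g(0)=g(1)=0$ is actually used; the rest of the argument is purely formal bookkeeping with the intertwining relation.
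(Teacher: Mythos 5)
Your proof is correct and complete. The paper itself states this lemma as a citation from \cite[Lemma 3]{Chaos} and gives no proof, so there is nothing in-text to compare against; but your argument is the natural one and handles the one genuinely non-trivial point correctly, namely ruling out an orientation-reversing $h$ by evaluating $h\circ g_1=g_2\circ h$ at $0$ and using $g_2(1)=0$. The remaining steps (iterating the conjugacy relation, the order-preserving bijection between the two $(2^{n-1}+1)$-element sets $g_1^{-n}(0)$ and $g_2^{-n}(0)$ via Remark~\ref{rem:1.1}) are exactly the formal bookkeeping you describe.
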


Lemma~\ref{lema:1.3} motivates the next construction.

\begin{definition}
For %
each $n\geq 1$ denote by $h_n:\, [0, 1]\rightarrow [0, 1]$ the
piecewise linear map, such that $h_n(\mu_{n,k}(g_1)) =
\mu_{n,k}(g_2)$ for all $k,\, 0\leq k\leq 2^{n-1}$, and $h_n$ is
linear at all other points. We call the sequence $\{h_n, n\geq
1\}$ the \textbf{Ulam's approximation} of the conjugacy from $g_1$
to $g_2$.\end{definition}

\begin{definition}
For the Ulam's approximation and $x<1$ denote $L_h(x)
=\lim\limits_{n\rightarrow \infty}h_n'(x-)$, and for every $x>0$
write $R_h(x) =\lim\limits_{n\rightarrow
\infty}h_n'(x+)$.~\footnote{we denote by $h_n'(x-)$ the left
derivative of $h_n$ at $x$. Analogously, $h_n'(x+)$ means the
right derivative.} Write for convenience  $L_h(0) = R_h(0)$ and
$R_h(1) = L_h(1)$.
\end{definition}

The simplest and natural example of a unimodal map is the case,
when graphs of both $g_l$ and $g_r$ in~\eqref{eq:1.1} are line
segments, i.e. $g$ is of the form \begin{equation}\label{eq:1.2}
f_v(x) = \left\{\begin{array}{ll}
\frac{x}{v},& \text{if }0\leq x\leqslant v,\\
 \frac{1-x}{1-v},& \text{if }
v\leq x\leq 1.
\end{array}\right.
\end{equation}

The topological conjugation of maps of the form~\eqref{eq:1.2} are
studied since early 1980-th, see~\cite{Jydy}, \cite{Skufca}
and~\cite{Yong-Guo-Wang}. We will call a function of the
form~\eqref{eq:1.2} \textbf{skew tent map} due to~\cite{Skufca}
and~\cite{Yong-Guo-Wang}, where Ulam's approximations of
conjugacies of skew tent maps were studied. It is known
(see~\cite{Jydy}) that all skew tent maps are topologically
conjugated to the tent. We have proved in~\cite{Chaos} the next
fact about the conjugacy of skew tent maps.

\begin{theorem}\label{th:2}\cite[Theorem~2]{Chaos}
For every $v_1,\, v_2\in (0,\, 1)$ let $f_{v_1}$ and $f_{v_2}$ be
skew tent maps, and let $h$ be the conjugacy from $f_{v_1}$ to
$f_{v_2}$. Let for any $x\in [0, 1]$ the limits $L(x)$ and $R(x)$
are defined by Ulam's approximation of $h$. Then the derivatives
$h'(x-)$ and $h'(x+)$ exist if and only if there exist $L(x)$ and
$R(x)$ respectively. Moreover, in these cases $h'(x-) = L(x)$ and
$h'(x+) = R(x)$.
\end{theorem}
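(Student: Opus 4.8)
The plan is to reduce everything to the renormalization structure of the two maps and then to a rate‑matching argument. Write $f_1=f_{v_1}$, $f_2=f_{v_2}$; we may assume $v_1\neq v_2$, since otherwise $h=\mathrm{id}$ and the claim is trivial. For $n\geq2$ and $i\in\{1,2\}$ the point $v_i$ lies in $f_i^{-2}(0)$, hence is a node $\mu_{n,k}(f_i)$ for every $n\geq2$; therefore the grid $f_i^{-n}(0)$ refines the linearity partition of $f_i$, and $f_i$ maps each grid interval $I^i_{n,k}:=[\mu_{n,k}(f_i),\mu_{n,k+1}(f_i)]$ affinely onto a grid interval of level $n-1$, so $f_i^{\,n-1}$ maps $I^i_{n,k}$ affinely onto $[0,1]$. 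Since, by Lemma~\ref{lema:1.3}, $h$ carries $I^1_{n,k}$ onto $I^2_{n,k}$ and preserves the combinatorial type, the two intervals are flattened onto $[0,1]$ by the same sequence of branches; consequently $|I^i_{n,k}|=v_i^{\,a}(1-v_i)^{\,b}$ with $a+b=n-1$ depending only on $(n,k)$, the interpolant $h_n$ is affine on $I^1_{n,k}$ with slope
\[
h_n'\big|_{I^1_{n,k}}=\frac{|I^2_{n,k}|}{|I^1_{n,k}|}=\Bigl(\tfrac{v_2}{v_1}\Bigr)^{\!a}\Bigl(\tfrac{1-v_2}{1-v_1}\Bigr)^{\!b}
\]
(equivalently, the difference quotient of $h$ across $I^1_{n,k}$), and from $h\circ f_1^{\,n-1}=f_2^{\,n-1}\circ h$ one gets $h|_{I^1_{n,k}}=(f_2^{\,n-1}|_{I^2_{n,k}})^{-1}\circ h\circ(f_1^{\,n-1}|_{I^1_{n,k}})$: the restriction of $h$ to any grid interval is an affine copy of $h$ itself. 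Subtracting the interpolant, $(h-h_n)|_{I^1_{n,k}}$ is an affine copy of $h-\mathrm{id}$, so $\|h-h_n\|_{C(I^1_{n,k})}=\|h-\mathrm{id}\|_{C[0,1]}\cdot|I^2_{n,k}|$. Note also that $h$ is affine on no nondegenerate interval (such an interval would contain a grid interval, forcing $h=\mathrm{id}$).

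Next I would settle the cheap reductions. Writing the slope above multiplicatively along the itinerary of a point under $f_1$, its logarithm is a partial sum of a sequence with values in $\{\log(v_2/v_1),\log((1-v_2)/(1-v_1))\}$, both of which are nonzero; hence such a partial sum can converge in $\overline{\mathbb R}$ only to $+\infty$ or $-\infty$. Therefore $L(x)$ (resp.\ $R(x)$) exists precisely when these partial sums tend to $\pm\infty$, and in that case $L(x)\in\{0,+\infty\}$ — so the whole statement concerns one‑sided derivatives equal to $0$ or $\infty$. If $x$ lies in the complete preimage of $0$, then for all large $n$ we have $x=\mu_{n,k_n}(f_1)$, and $h_n'(x-)$ is exactly the difference quotient of $h$ over $[\mu_{n,k_n-1}(f_1),x]$, an interval abutting $x$ from the left and shrinking to $\{x\}$; hence $h_n'(x-)\to h'(x-)$ whenever $h'(x-)$ exists, and the existence of $L(x)=\lim_n h_n'(x-)$ is literally the convergence of these difference quotients, which a short monotonicity argument identifies with $h'(x-)$. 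The same works for $R$.

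The substance is the case $x\notin f_1^{-\infty}(0)$. For $y<x$ near $x$ let $n=n(y)$ be the largest level for which $x$ and $y$ lie in one common grid interval $I^1_{n,k}\ni x$; then $y$ and $x$ fall into the two halves into which $I^1_{n,k}$ is split at level $n+1$. Applying the renormalization identity on $I^1_{n,k}$ gives
\[
\frac{h(x)-h(y)}{x-y}=h_n'\big|_{I^1_{n,k}}\cdot\frac{h(x_n)-h(y_n)}{x_n-y_n},\qquad x_n:=f_1^{\,n-1}(x),\quad y_n:=f_1^{\,n-1}(y),
\]
and, crucially, as $y$ ranges over the relevant half of $I^1_{n,k}$ the sampling point $y_n$ ranges over a whole half of $[0,1]$ (the one separated from $x_n$ by $v_1$). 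The factor $h_n'|_{I^1_{n,k}}$ equals $h_n'(x\pm)$ and thus tends to $L(x)=R(x)$ when that limit exists. For the direction ``$h'(x-)$ exists $\Rightarrow L(x)=h'(x-)$'': if the left‑hand side converges to some $\ell$, then along a suitable subsequence of levels $h_n'|_{I^1_{n,k}}\cdot\frac{h(x_n)-h(\cdot)}{x_n-\cdot}\to\ell$ uniformly on a fixed half of $[0,1]$; since $h$ is affine on no interval, the ``shape'' factor $\frac{h(x_n)-h(\cdot)}{x_n-\cdot}$ cannot be driven to a constant on a whole half‑interval, so this is possible only if $h_n'|_{I^1_{n,k}}\to0$ or $\to\infty$, which (together with the first reduction) shows $L(x)$ exists and equals $\ell$. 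For the converse, if $L(x)$ exists then $h_n'|_{I^1_{n,k}}\to0$ or $\to\infty$, and one must show this drags the product to the same limit uniformly in $y$, i.e.\ control the shape factor from above and below as $x_n$ runs along the orbit of $x$.

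The main obstacle is precisely this uniform control of $\frac{h(x_n)-h(y_n)}{x_n-y_n}$ when the anchor $x_n=f_1^{\,n-1}(x)$ approaches one of the ``bad'' points — the endpoints $0,1$, the turning point $v_1$, or the repelling fixed points of the two branches — where $h$ has one‑sided derivative $0$ or $\infty$ and the shape factor degenerates. Here I would use the explicit local self‑similarity of $h$: near $0$ one has $h(v_1^{\,j}t)=v_2^{\,j}h(t)$ for $t\in[v_1,1]$, and analogous relations near $1$, near $v_1$, and near the right‑branch fixed point (with contraction ratios $(1-v_1)^2$, $(1-v_2)^2$ there); these pin down the exact Hölder rate at which the shape factor blows up in terms of the length of the run of symbols in the itinerary of $x$ that brings the orbit near the bad point, and that run length is controlled by exactly how fast the partial sums defining $h_n'|_{I^1_{n,k}}$ diverge — i.e.\ by the hypothesis that $L(x)$ exists. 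Matching these two rates, in the handful of cases according to the sign of the drift and to which bad point is being approached, closes both directions of the equivalence. The remaining ingredients — uniform convergence $h_n\to h$, density of $\bigcup_n\mu_{n,\cdot}(f_1)$ from Theorem~\ref{th:1}, and the bookkeeping of address words — are routine.
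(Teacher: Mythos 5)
Your setup is sound, and for the skew tent case it is in places cleaner than the general carcass machinery: the slope formula $h_n'|_{I^1_{n,k}}=(v_2/v_1)^a((1-v_2)/(1-v_1))^b$, the resulting reduction to $L(x)\in\{0,\infty\}$ when $v_1\neq v_2$, the renormalization identity $\frac{h(x)-h(y)}{x-y}=h_n'|_{I^1_{n,k}}\cdot\frac{h(x_n)-h(y_n)}{x_n-y_n}$ with $x_n,y_n$ separated by $v_1$, and the identification of the degeneracy of the shape factor when $x_n\to v_1$ as the crux are all correct, and they parallel what the paper does for carcass maps in Lemmas~\ref{lema:3.6}, \ref{lema:3.10} and~\ref{lema:3.11}. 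But the heart of the theorem --- the uniform two-sided control of the shape factor --- is left as a plan, and the one sentence you offer to justify why it closes is wrong as stated. You write that the run length driving $x_n$ toward the bad point ``is controlled by exactly how fast the partial sums defining $h_n'|_{I^1_{n,k}}$ diverge --- i.e.\ by the hypothesis that $L(x)$ exists.'' Existence of $L(x)$ says only that those partial sums tend to $\pm\infty$; it carries no rate whatsoever, and the runs can be arbitrarily long. What actually saves the argument is not a rate extracted from the hypothesis but an exact cancellation \emph{within each run}: over a maximal block of $t$ constant symbols the shape factor can deteriorate by at most a bounded constant times the factor by which $h_n'|_{I^1_{n,k}}$ itself changes over those same $t$ levels, so that $\sup_y\frac{h(x)-h(y)}{x-y}$ over the block is comparable to $\max_m h_m'$ over the block; convergence of the full sequence $(h_n'(x))_n$ then finishes it. This comparison is precisely the case analysis \eqref{eq:3.3}--\eqref{eq:3.21} in the paper's Lemma~\ref{lema:3.11}, and it has to be carried out, not asserted.

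A second point your sketch does not address, needed in \emph{both} directions: $L(x)=\lim_n h_n'(x-)$ is a limit over all $n$, whereas the chord slopes $\frac{h(x)-h(y)}{x-y}$ with $y<x$ pin down $h_n'$ (with uniformly bounded shape factors) only at the separation levels $\{n:\,x_n=1\}$ --- for $y$ equal to the left endpoint of $I^1_{n,k_{n-1}}$, the point $y_m$ is in the half of $[0,1]$ opposite to $x_m$ only when $m$ is such a level, and between consecutive such levels the endpoint $\widehat{x}_{n-1}$ does not move. So from ``$h'(x-)=\ell$'' you only get $h_{n}'\to\ell$ along that subsequence, and from ``$L(x)=\ell$'' you must still rule out excursions of the shape factor at non-separation levels. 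The rescue is the analogue of Remark~\ref{rem:3.5}: for skew tent maps all but two of the multiplicative increments $h_{m+1}'/h_m'$ inside a block of constant symbols equal $v_2/v_1$, so $h_m'$ interpolates monotonically, up to bounded factors, between its values at consecutive separation levels, and convergence along the subsequence to $0$ or $\infty$ is equivalent to convergence of the whole sequence. This fact is true and elementary, but it is load-bearing and absent from your write-up. In short: the architecture is right and is essentially the paper's, but the two quantitative lemmas that constitute the proof are missing, and the heuristic you give in their place does not substitute for them.
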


The aim of this article is to generalize Theorem~\ref{th:2} to a
reacher class of maps. We define a piecewise linear function $g$
of the form~\eqref{eq:1.1} to be a \textbf{carcass map}, if every
its kink belongs to $g^{-\infty}(0)$. We prove the following facts
about carcass maps:

\begin{theorem}\label{th:3}
For every carcass map $g$ the set $g^{-\infty}(0)$ is dense in
$[0, 1]$.
\end{theorem}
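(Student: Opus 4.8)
The plan is to exploit the Markov structure that the carcass condition forces on $g$. Since $g$ is piecewise linear it has finitely many kinks, each lying in $g^{-\infty}(0)=\bigcup_m g^{-m}(0)$, so there is an integer $N\ge2$ with all kinks of $g$ — including the turning point $v\in g^{-2}(0)$ — contained in $g^{-N}(0)$. Write $P_m$ for the partition of $[0,1]$ into the intervals $[\mu_{m,k}(g),\mu_{m,k+1}(g)]$. First I would record the elementary facts that $g^{-N}(0)$ is forward invariant, because $g^N(x)=0$ implies $g^N(g(x))=g(g^N(x))=g(0)=0$; that the sets $g^{-m}(0)$ increase with $m$; and that, since $g$ restricted to $[0,v]$ and to $[v,1]$ is a homeomorphism onto $[0,1]$, for every $m\ge1$ the map $g$ sends each interval of $P_{m+1}$ bijectively and monotonically onto a single interval of $P_m$. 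For $m\ge N$ the map $g$ is moreover affine on each interval of $P_m$, so it sends each interval $I$ of $P_N$ affinely onto an interval whose endpoints lie in $g^{-(N-1)}(0)\subseteq g^{-N}(0)$, i.e.\ onto a union of intervals of $P_N$; thus $P_N$ is a Markov partition, with an associated transition graph on its intervals $I_1,\dots,I_M$. Iterating, every interval of $P_{N+n}$ coincides with a cylinder $[i_0\cdots i_n]=\{x\in I_{i_0}:g^t(x)\in I_{i_t},\ 1\le t\le n\}$ over an admissible walk $i_0\to\cdots\to i_n$, on which $g^n$ is affine onto $I_{i_n}$ with slope of modulus $\prod_{t=0}^{n-1}\sigma_{i_t}$, where $\sigma_i$ denotes the modulus of the (constant) slope of $g$ on $I_i$. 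Since that cylinder then has length $|I_{i_n}|/\prod_{t=0}^{n-1}\sigma_{i_t}\le(\prod_{t=0}^{n-1}\sigma_{i_t})^{-1}$, it suffices to show that $\prod_{t=0}^{n-1}\sigma_{i_t}\to\infty$ as $n\to\infty$, uniformly over admissible walks: then the mesh of $P_{N+n}\to0$ and $g^{-\infty}(0)$ is dense.

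For any cycle $i_0\to\cdots\to i_r=i_0$ one has $\sigma_{i_t}|I_{i_t}|=|g(I_{i_t})|\ge|I_{i_{t+1}}|$; multiplying around the cycle gives $\lambda:=\prod_{t=0}^{r-1}\sigma_{i_t}\ge1$, with equality precisely when $g(I_{i_t})=I_{i_{t+1}}$ for all $t$, i.e.\ when the cycle is tight.

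The crux — and the step I expect to be the main obstacle — is to show that a carcass map has no tight cycle. If it had one, $g^r$ would map $J:=I_{i_0}$ affinely and bijectively onto itself, hence with slope $\pm1$, so $g^{2r}$ would restrict to the identity on $J$ and every point of $J$ would be periodic. But the two distinct endpoints of $J$ lie in $g^{-N}(0)\subseteq g^{-\infty}(0)$, and a periodic point $x$ with $g^N(x)=0$ must be $0$: its forward orbit is a finite cyclic set containing $g^N(x)=0$, and $g(0)=0$ forces that set to be $\{0\}$. As at most one endpoint of $J$ can be $0$, this is a contradiction. Hence every cycle has $\lambda>1$, and since there are finitely many simple cycles, $\lambda_0:=\min\{\lambda(C):C\text{ a simple cycle}\}>1$.

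Finally, an admissible walk of length $n$ decomposes, by repeatedly excising closed sub-walks, into simple cycles of total length at least $n-M+1$ together with a non-repeating walk of length at most $M-1$; hence $\prod_{t=0}^{n-1}\sigma_{i_t}\ge\delta\,\lambda_0^{(n-M+1)/M}$ for some constant $\delta>0$ depending only on $g$. Therefore every interval of $P_{N+n}$ has length at most $\delta^{-1}\lambda_0^{-(n-M+1)/M}\to0$, the mesh of $P_{N+n}$ tends to $0$, and $g^{-\infty}(0)$ is dense in $[0,1]$, which proves Theorem~\ref{th:3}.
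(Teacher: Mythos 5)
Your proof is correct, but it takes a genuinely different route from the paper's. The paper derives Theorem~\ref{th:3} in one line from Remark~\ref{rem:3.3}: since $g^{\,n-n_0+1}$ maps each interval $I_{n,k}$ affinely onto one of the finitely many base intervals $I_{n_0-1,k^*}$, the splitting ratio $\delta_{n,k}$ of every interval at every level belongs to the finite set $\mathcal{D}\subset(0,1)$ (Remark~\ref{rem:2.22}); hence each refinement multiplies interval lengths by at most $\delta^+=\max_{\delta\in\mathcal{D}}\{\delta,1-\delta\}<1$, and the mesh of $g^{-n}(0)$ decays geometrically. You instead set up the transition graph of the Markov partition $P_N$ and prove expansion along admissible walks, the crux being your ``no tight cycle'' lemma: a tight cycle would make $g^{2r}$ the identity on a partition interval, forcing both of its distinct endpoints to be periodic points lying in $g^{-N}(0)$, hence both equal to $0$ --- a contradiction. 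That lemma is the one genuinely new ingredient and it is sound; the cycle-excision estimate $\prod_t\sigma_{i_t}\geq\delta\,\lambda_0^{(n-M+1)/M}$ then gives the same geometric mesh decay. The paper's argument is shorter once the bookkeeping of Section~\ref{sec:02} (in particular Remarks~\ref{rem:2.19}--\ref{rem:2.22}) is in place, and it yields the explicit two-sided bounds on $\lI_{n,k}$ that are reused throughout Section~\ref{sec:05}; your argument is self-contained, avoids the binary-expansion formalism entirely, and isolates a structural fact (carcass maps admit no affinely periodic Markov interval) that the paper never makes explicit.
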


Theorems~\ref{th:1} and~\ref{th:3} yield that every carcass maps
$g_1$ and $g_2$ are topologically conjugated.

\begin{theorem}\label{th:4}
Let $h$ be the conjugacy from a carcass map $g_1$ to a carcass map
$g_2$. Then the following implications hold:

(1) If for some $x\in [0, 1]$ the limits $L(x)$ and $R(x)$ exist,
are equal, and either $L(x)=0$ or $L(x)=\infty$, then the
derivative $h'(x)$ exists and $h'(x) =L(x)$.

(2) If for at least one $x\in [0,\, 1]$ the derivative $h'(x)$
exists, is positive and finite, then $h$ is piecewise linear.
\end{theorem}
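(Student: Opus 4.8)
\noindent The plan is to exploit a self-affine structure of the conjugacy that is forced by the fact that a carcass map has only finitely many kinks. Fix $n_0\geq 2$ larger than the level of every kink of $g_1$ and of $g_2$. Then every kink of $g_i$ is one of the points $\mu_{n_0,k}(g_i)$, so $g_i$ is affine on each interval $I_{n,k}^{(i)}:=[\mu_{n,k}(g_i),\mu_{n,k+1}(g_i)]$ once $n\geq n_0$; moreover, since the turning point $v_i$ of $g_i$ is $\mu_{2,1}(g_i)\in g_i^{-\infty}(0)$, the map $g_i$ is monotone on every level-$n$ interval with $n\geq2$ and carries it homeomorphically onto a level-$(n-1)$ interval. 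Combining this with Lemma~\ref{lema:1.3} (so that $h(I_{n,k}^{(1)})=I_{n,k}^{(2)}$) and the conjugacy equation $h\circ g_1=g_2\circ h$, one obtains, for every level-$n$ interval $I$ of $g_1$ with $n\geq n_0$,
\[
h|_{g_1(I)}=(g_2|_{h(I)})\circ(h|_I)\circ(g_1|_I)^{-1},
\]
where $g_1|_I$ and $g_2|_{h(I)}$ are affine and of the same orientation. Consequently, after affinely rescaling domain and range to $[0,1]$, the normalized restriction $\widehat h_I\colon[0,1]\to[0,1]$ of $h$ to $I$ depends only on the level-$n_0$ interval onto which $g_1^{\,n-n_0}$ maps $I$. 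Writing $\Psi_0,\dots,\Psi_{2^{n_0-1}-1}$ for the shapes $\widehat h_I$ attached to the $2^{n_0-1}$ level-$n_0$ intervals of $g_1$, the same identity with $I=I_{n_0,k}^{(1)}$ shows that $\Psi_k$ is, up to affine rescalings and a possible reversal, the concatenation of the two shapes indexed by the two level-$n_0$ subintervals of the level-$(n_0-1)$ interval $g_1(I_{n_0,k}^{(1)})$.

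For part (2), assume $h'(x)$ exists, is finite and positive. Let $I_n(x)$ be the level-$n$ interval containing $x$ (taking, if $x\in g_1^{-\infty}(0)$, a level-$n$ interval having $x$ as an endpoint and using the corresponding one-sided derivative, which equals $h'(x)$). By Theorem~\ref{th:3} the intervals $I_n(x)$ shrink to $x$, and having a finite positive derivative at $x$ is precisely the statement that the normalized restrictions $\widehat h_{I_n(x)}$ converge uniformly to the identity. Since $\widehat h_{I_n(x)}$ is always one of the finitely many shapes $\Psi_k$, it follows that $\Psi_{k}=\mathrm{id}$ for at least one index $k$. By the concatenation identity, if $\Psi_k=\mathrm{id}$ then each of its two halves is affine, hence the two shapes appearing there also equal $\mathrm{id}$; thus $\{k:\Psi_k=\mathrm{id}\}$ is stable under passing to the two children-indices, and because iterating $g_1$ spreads any level-$n_0$ interval over all of them in $n_0-1$ steps, this set is everything. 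Hence $\widehat h_I=\mathrm{id}$ for every level-$n_0$ interval $I$, i.e. $h$ is affine on each $I_{n_0,k}^{(1)}$, i.e. $h$ is piecewise linear.

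For part (1), the case $L(x)=R(x)=\infty$ reduces to the case $L(x)=R(x)=0$ by passing to the conjugacy $h^{-1}$ from $g_2$ to $g_1$, whose Ulam approximation is $(h_n^{-1})_{n\geq1}$ and whose limits at $h(x)$ are $1/L(x)$ and $1/R(x)$, and by noting that $(h^{-1})'(h(x))=0$ is equivalent to $h'(x)=\infty$. So assume $L(x)=R(x)=0$ and aim to show $h'(x)=0$. With $a_n$ the left endpoint of $I_n(x)$ and $\sigma_n=|h(I_n(x))|/|I_n(x)|$ the slope of $h_n$ on $I_n(x)$, the framework above gives $h(a_n+t|I_n(x)|)=h(a_n)+\sigma_n|I_n(x)|\,\Psi_{k(n)}(t)$ for $t\in[0,1]$ and $n\geq n_0$, while by hypothesis $\sigma_n\to L(x)=R(x)=0$. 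If $x$ sits at normalized position $t_n$ in $I_n(x)$ and $y$ is close to $x$, pick $n=n(y)$ largest with $y\in I_n(x)$ (so $n(y)\to\infty$); then, writing $\tau$ for the position of $y$,
\[
\frac{h(y)-h(x)}{y-x}=\sigma_{n(y)}\cdot\frac{\Psi_{k(n(y))}(\tau)-\Psi_{k(n(y))}(t_{n(y)})}{\tau-t_{n(y)}},
\]
and it remains to show the right-hand side tends to $0$. Since $t_{n(y)}$ and $\tau$ lie on opposite sides of the split point of $I_{n(y)}(x)$, the difference quotient of $\Psi_{k(n(y))}$ is the mediant of two one-sided difference quotients of $\Psi_{k(n(y))}$ at that split point, each of which, via the concatenation identity, converts a factor $\sigma_{n(y)}$ into the corresponding factor at the next level.

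The main obstacle is exactly this last step. The universal shapes $\Psi_k$ need not be Lipschitz — for skew tent maps they typically have infinite derivative on a dense set — so one cannot bound the difference quotient above by a constant; instead one must show that the difference quotients of $\Psi_{k(n(y))}$ that actually occur grow slowly enough to be overwhelmed by $\sigma_{n(y)}\to0$. Iterating the mediant/concatenation estimate replaces the product $\sigma_{n(y)}\cdot(\text{difference quotient})$ by a product of the same shape one level finer, and the hypothesis $L(x)=R(x)=0$ — that the \emph{whole} sequence $(\sigma_n)$, not just a subsequence, tends to $0$ — is what forces this telescoping to converge to $0$; making this quantitative is the heart of the argument, and it is also where the weaker conclusion for finite positive limits (as opposed to $0$ or $\infty$) is unavoidable. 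The remaining cases ($x$ a node, or $y<x$) are handled symmetrically.
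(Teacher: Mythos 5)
Your argument for part (2) is essentially correct and follows the same route as the paper: the conjugacy equation plus the affineness of $g_1,g_2$ on level-$n_0$ intervals forces the renormalized restrictions of $h$ to level-$n$ intervals ($n\geq n_0$) to coincide, up to reversal, with one of finitely many ``shapes'' attached to the level-$n_0$ intervals (this is the paper's identity~\eqref{eq:3.23}); a finite positive derivative at one point forces one shape to be the identity, and the propagation through $g_1$-images forces all of them to be. Your verification that differentiability at $x$ gives \emph{uniform} convergence of $\widehat h_{I_n(x)}$ to the identity is sound, since every point of $I_n(x)$ is within $|I_n(x)|\to 0$ of $x$. The reduction of the case $L(x)=R(x)=\infty$ to $L(x)=R(x)=0$ via $h^{-1}$ is also legitimate, since $h_n^{-1}$ is exactly the Ulam approximation of $h^{-1}$.

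For part (1), however, there is a genuine gap, and you have located it yourself: everything hinges on showing that the difference quotients $\frac{h(y)-h(x)}{y-x}$, for $y\in I_n(x)\setminus I_{n+1}(x)$, are bounded above and below by $k^{\pm}\cdot h_{n+i}'(x)$ for constants $k^{\pm}$ \emph{independent of $x$, $n$ and $y$} and some $i=i(y)\geq 1$. Writing the quotient as $\sigma_{n(y)}$ times a difference quotient of a shape $\Psi_k$ does not by itself help, because the $\Psi_k$ are typically nowhere Lipschitz; and the dangerous configuration is precisely when the $g_1$-expansion of $x$ has a long run $x_{n+1}=1, x_{n+2}=\cdots=x_{n+t+1}=0$, so that $x$ sits at relative distance $\asymp\delta_0^{\,t}$ from an endpoint of $I_{n+1}(x)$ and the normalized positions $\tau, t_{n(y)}$ both degenerate. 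Ruling out that the shape's difference quotient grows faster than $\sigma_{n}^{-1}$ decays is exactly the content of the paper's Lemma~\ref{lema:3.6} together with Lemmas~\ref{lema:3.10}--\ref{lema:3.12}, whose proofs occupy the estimates \eqref{eq:3.3}--\eqref{eq:3.22}: one brackets $s$ and $x$ between nearby nodes $\widehat{x}_{n+i+1}^{\,-},\widehat{x}_{n+i+2}^{\,\pm},\dots$, controls all the relevant lengths by powers of $\delta_-$, $\delta^+$ and $\delta_0$ via Remark~\ref{rem:2.20}, and uses Remark~\ref{rem:3.9} to convert the accumulated $\delta_0$-powers back into $h_{n+i}'(x)$. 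Your ``telescoping mediant'' sentence is a correct description of what must be proved, but it is a statement of the problem, not a proof; as written, part (1) is not established.
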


We use Theorem~\ref{th:4} as a main tool in the proof of the next
fact.

\begin{theorem}\label{th:5}
There exists a carcass map $g$ and a point $x^*\in [0, 1]$ with
the following properties:

(1) The limits $L(x^*)$ and $R(x^*)$ exist, are positive and
finite, equal one to another;

(2) the derivative of $h$ does not exist at $x^*$.
\end{theorem}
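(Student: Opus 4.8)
The plan is to derive Theorem~\ref{th:5} from Theorem~\ref{th:4}(2): first I establish, for an arbitrary conjugacy $h$ between carcass maps, that the mere existence of $h'(x^*)$ forces it to equal the common value of the Ulam limits; then the whole statement reduces to constructing one carefully chosen pair of maps together with the point $x^*$.

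\emph{Step 1: an a priori identification of $h'(x^*)$.} I claim that if $h'(x^*)$ exists as an element of $[0,+\infty]$, then $L(x^*)=R(x^*)=h'(x^*)$. Assume $x^*\notin g_1^{-\infty}(0)$ (this is the only case needed; otherwise one runs the same argument with one-sided partition intervals). By Theorem~\ref{th:3} the set $g_1^{-\infty}(0)$ is dense, so for each $n$ the level-$n$ partition interval $I_n=[a_n,b_n]$ with $a_n<x^*<b_n$ shrinks to $\{x^*\}$. Since $h_n$ is linear on $I_n$ and agrees with $h$ at $a_n$ and $b_n$,
\[
h_n'(x^*-)=h_n'(x^*+)=\frac{b_n-x^*}{b_n-a_n}\cdot\frac{h(b_n)-h(x^*)}{b_n-x^*}+\frac{x^*-a_n}{b_n-a_n}\cdot\frac{h(x^*)-h(a_n)}{x^*-a_n},
\]
and as $n\to\infty$ both difference quotients on the right converge to $h'(x^*)$, hence so does their convex combination. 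Consequently, if $L(x^*)=R(x^*)$ equals a positive finite number $c$ and $h'(x^*)$ exists, then $h'(x^*)=c$, so by Theorem~\ref{th:4}(2) the conjugacy $h$ is piecewise linear.

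\emph{Step 2: reduction and construction.} It now suffices to produce carcass maps $g_1,g_2$ with conjugacy $h$ and a point $x^*\notin g_1^{-\infty}(0)$ such that (a) the slopes $s_n=h_n'(x^*)=|h(I_n)|/|I_n|$ converge to some $c\in(0,\infty)$, and (b) $h$ is not piecewise linear. Indeed (a) gives $L(x^*)=R(x^*)=c$, which is property~(1); and property~(2) follows, since if $h'(x^*)$ existed it would be $0$ or $\infty$ (impossible, for by Step~1 it would then equal $c$) or positive and finite (impossible, for by Step~1 and Theorem~\ref{th:4}(2) it would then force $h$ to be piecewise linear, against (b)). For the construction I would take $g_2=f_{1/2}$, whose level-$n$ partition is the uniform dyadic one (each interval bisecting at the next level), and $g_1$ a carcass map having, besides its turning point, at least one interior kink in $g_1^{-\infty}(0)$, so that $g_1$ is not a skew tent map; this is essential, since skew tent maps are excluded by Theorem~\ref{th:2} together with Theorem~\ref{th:4}(2). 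As $g_1$ has finitely many slopes, its subdivision ratios along any itinerary take finitely many values, so convergence of $s_n$ is equivalent to those ratios being eventually exactly dyadic along the itinerary of $x^*$; I would choose $g_1$ and $x^*$ so that this holds — then $s_n$ is eventually constant, giving (a) with $c\in(0,\infty)$ — while arranging that the interior kink makes a dense set of \emph{other} partition intervals be subdivided non-dyadically by $g_1$, hence non-affinely by $h$, so that $h$ is affine on no subinterval, giving (b).

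\emph{The main obstacle.} The hard part is realizing (a) and (b) simultaneously by a genuine carcass map. Convergence of $s_n$ is a local rigidity at $x^*$: it essentially forces the \emph{endpoints} of the nested partition intervals about $x^*$ to scale coherently (in fact, by dyadic splitting along the spine), which pushes $h$ toward being affine near $x^*$; whereas non-piecewise-linearity forbids $h$ from being affine on any interval. The resolution is that although the endpoints along the spine scale dyadically, the \emph{interior} combinatorial structure of those nested intervals — fed by the off-spine kink — keeps changing, so that $h$ restricted to each of them is still non-affine and the one-sided difference quotients of $h$ at $x^*$ (equivalently, the relative positions of $x^*$ inside $I_n$ and of $h(x^*)$ inside $h(I_n)$) oscillate rather than converge. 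Verifying that a piecewise-linear unimodal map with finitely many pieces, all of whose kinks lie in $g^{-\infty}(0)$, can be tuned to carry such an itinerary — dyadic branching along the spine through $x^*$, non-dyadic branching on a dense set elsewhere — is where the real work lies, and it is there that Theorem~\ref{th:4} is used as the decisive tool.
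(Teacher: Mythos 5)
Your Steps 1 and 2 set up the right reduction, and it is essentially the one the paper uses (Step 1 is left implicit there): the convex-combination identity shows that if $h'(x^*)$ exists in $[0,\infty]$ then $h'(x^*)=L(x^*)=R(x^*)$, so once one arranges $L(x^*)=R(x^*)=c\in(0,\infty)$, Theorem~\ref{th:4}(2) reduces the whole theorem to exhibiting a pair of carcass maps whose conjugacy is not piecewise linear. The genuine gap is that you never exhibit the example: the construction is deferred to ``I would choose $g_1$ and $x^*$ so that this holds,'' and you yourself flag the verification as where the real work lies. For an existence theorem the explicit witness \emph{is} the proof, so as written the argument is incomplete precisely at the point that carries all the content.

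The gap is real but closable, and the paper's example shows that both requirements (a) and (b) are far cheaper to realize than your discussion suggests. Take $g$ piecewise linear through $(0,0)$, $(1/5,1/2)$, $(1/2,1)$, $(1,0)$; this is a carcass map since $1/2\in g^{-2}(0)$ and $1/5\in g^{-3}(0)$, and one conjugates it to the tent map $f=f_{1/2}$. Let $x^*$ be the positive fixed point. The nested intervals $I_{n+1,k_n}\ni x^*$ are $g_r^{-n}([0,1])$ and depend only on the right branch, whose slope is $-2$, identical to that of the tent map; hence $\mu_{n+1,k_n}(g)=\mu_{n+1,k_n}(f)$ and $h_n'(x^*)=1$ for all $n$, which gives (a) with $c=1$. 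No tuning of the itinerary is needed, because the spine through the fixed point never meets the interior kink. And (b) needs no density argument about non-dyadic subdivision: if $h$ were piecewise linear, then comparing slopes at the fixed endpoint $0$ in $g=h\circ f\circ h^{-1}$ would give $g'(0)=f'(0)=2$, whereas $g'(0)=5/2$. So Theorem~\ref{th:4}(2) is indeed the decisive tool, as you anticipated, but the witness for non-piecewise-linearity is a one-line computation at $0$ rather than a global statement about $h$ being affine on no subinterval.
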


Notice that the idea of the proof of Theorem~\ref{th:4} is similar
to one of the proof of Theorem~\ref{th:2} in~\cite{Chaos}.
Nevertheless, it is more interesting, that Theorem~\ref{th:2} can
not be generalized to carcass maps.

\section{Preliminaries}\label{sec:02}

Let a number $x\in [0, 1]$ and a unimodal map $g$ be fixed till
the end of this section. Write, for simplicity, $\mu_{n,k}$
instead of $\mu_{n,k}(g)$ for all $n\geq 1$ and $k\in\{0,\ldots,
2^{n-1}\}$.

\begin{lemma}\cite[Lemma 1]{Chaos}\label{lema:2.1}
For every $n\geq 2$ and $k,\, 0\leq k\leq 2^{n-2}$, we have that:
$$\begin{array}{ll}
(i)\ g(\, \mu_{n,k}\, )=\mu_{n-1,k}; & %
\hskip 1 cm(ii)\ g(\, \mu_{n,k}\, )= g( \mu_{n,2^{n-1}-k} ).
\end{array}$$

\end{lemma}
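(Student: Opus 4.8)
The plan is to analyse the two monotone branches of $g$ separately; no induction is needed, one fixed $n$ at a time. First I would record the elementary facts that drive everything. Write $S_m=g^{-m}(0)$. Then $S_{m-1}\subseteq S_m$ and $g(S_m)=S_{m-1}$ for $m\geq 2$: the inclusion $g(S_m)\subseteq S_{m-1}$ is immediate from $g^{m-1}(g(x))=g^m(x)=0$, while surjectivity of $g$ (which follows from $g(0)=0$, $g(v)=1$ and continuity) gives the reverse. Since $g_l\colon[0,v]\to[0,1]$ is an increasing bijection and $g_r\colon[v,1]\to[0,1]$ a decreasing one, we get $S_m\cap[0,v]=g_l^{-1}(S_{m-1})$ and $S_m\cap[v,1]=g_r^{-1}(S_{m-1})$, each of cardinality $|S_{m-1}|=2^{m-2}+1$ by Remark~\ref{rem:1.1}. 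Because $1\in S_{m-1}$ (as $g(1)=0$) and $g_l(v)=g_r(v)=1$, these two sets meet exactly in $\{v\}$; hence $v\in S_m$, and counting shows that $v$ is the $(2^{m-2})$-th point of $S_m$ in increasing order. That is, $\mu_{m,2^{m-2}}=v$, the points $\mu_{m,0},\dots,\mu_{m,2^{m-2}}$ exhaust $S_m\cap[0,v]$ in order, and $\mu_{m,2^{m-2}},\dots,\mu_{m,2^{m-1}}$ exhaust $S_m\cap[v,1]$ in order.

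Next I would transport these monotone enumerations along the branches. On $[0,v]$ the map $g=g_l$ is increasing and carries $S_m\cap[0,v]$ bijectively onto $S_{m-1}$, so it sends $\mu_{m,0}<\dots<\mu_{m,2^{m-2}}$ to $\mu_{m-1,0}<\dots<\mu_{m-1,2^{m-2}}$ term by term; this is precisely statement (i) for $0\leq k\leq 2^{m-2}$. On $[v,1]$ the map $g=g_r$ is decreasing and carries $S_m\cap[v,1]$ bijectively onto $S_{m-1}$, so it sends $\mu_{m,2^{m-2}}<\dots<\mu_{m,2^{m-1}}$ to $\mu_{m-1,2^{m-2}}>\dots>\mu_{m-1,0}$; reading off the entry of index $k$ gives $g(\mu_{m,k})=\mu_{m-1,\,2^{m-1}-k}$ for $2^{m-2}\leq k\leq 2^{m-1}$. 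Finally, for $0\leq k\leq 2^{m-2}$ the index $2^{m-1}-k$ lies in $[2^{m-2},2^{m-1}]$, so the last formula yields $g(\mu_{m,2^{m-1}-k})=\mu_{m-1,\,2^{m-1}-(2^{m-1}-k)}=\mu_{m-1,k}=g(\mu_{m,k})$, which is (ii). Renaming $m$ as $n$ completes the argument.

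The one delicate point is the bookkeeping in the first paragraph: pinning down that the two branches overlap in exactly the single point $v$, that $v$ therefore receives the index $2^{m-2}$ in $S_m$, and that the cardinalities on the two sides match — this is where Remark~\ref{rem:1.1} is genuinely used. Once the index of $v$ is fixed, the remainder is just the transport of a monotone listing along a monotone bijection and is routine. I would also note at the outset that $g_l$ and $g_r$ are \emph{strictly} monotone (equivalently, that each $S_m$ is finite), since otherwise $S_m$ could be infinite; this is implicit in Remark~\ref{rem:1.1} and is assumed throughout.
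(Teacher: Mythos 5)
Your argument is correct: the counting via Remark~\ref{rem:1.1} pins $v=\mu_{n,2^{n-2}}$, and transporting the ordered listing of $g^{-n}(0)$ along the increasing branch gives (i) while the decreasing branch gives $g(\mu_{n,j})=\mu_{n-1,2^{n-1}-j}$ and hence (ii). Note that this paper does not prove the lemma at all — it is imported by citation from an earlier work — so there is no in-paper proof to compare against; your branch-by-branch order-transport argument is the natural one and correctly supplies the missing details, including the observation that strict monotonicity of $g_l$ and $g_r$ is needed for the cardinality count.
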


\begin{notation}\cite[Notation~4.4]{Chaos}\label{not:2.2}
1. For every $n\geq 1$ denote by $k_n$ the maximal $k\in \{
0,\ldots, 2^{n-1}-1\}$ such that $x\in [\mu_{n,k}, \mu_{n,k+1}]$.

2. For every $n\geq 0$ denote $\iks{n} =\mu_{n+1,k_n}, $ $\iksp{n}
= \mu_{n+1,k_n+1},$ $\ikspm{n} = \mu_{n+2,2k_n+1}, $ $
\widehat{x}_n^{\, -} =\mu_{n+1,k_n-1}$ and $\widehat{x}_n^{\, \mp}
=\mu_{n+2,2k_n-1}$.

3. For any $n\geq 1$ and $k,\, 0\leq k< 2^{n-1}$ denote $I_{n,k} =
(\mu_{n,k},\, \mu_{n,k+1})$ and $\lI_{n,k} =
\mu_{n,k+1}-\mu_{n,k}$.

4. Denote $I_n(x) = I_{n,k_{n-1}}$ and $I_n^-(x) =
I_{n,k_{n-1}-1}$. Also $\lI_n(x) =\lI_{n,k_{n-1}}$ and $\lI_n^-(x)
=\lI_{n,k_{n-1}-1}$.
\end{notation}

\begin{lemma}\cite[Lemma~7]{Chaos}
For every $x\in [0, 1]$ there exists a sequence $(x_i)_{i\geq 0}$
of $x_i\in \{0; 1\}$ with $x_0=0$, such that $k_n =
\sum\limits_{i=1}^{n}x_i2^{n-i}$ for all $n\geq 1$.
\end{lemma}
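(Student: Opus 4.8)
The plan is to reduce the statement to the single recurrence
$$k_{n+1}\in\{2k_n,\ 2k_n+1\}\qquad\text{for all }n\geq 1.$$
Granting this, one sets $x_0=0$, observes that $k_1$ is forced to equal $0$ (the only element of $\{0,\ldots,2^0-1\}$), puts $x_1=0$, and defines $x_{n+1}:=k_{n+1}-2k_n\in\{0,1\}$ for $n\geq 1$. Then $k_n=\sum_{i=1}^{n}x_i2^{n-i}$ follows by induction, since $k_{n+1}=2k_n+x_{n+1}=2\sum_{i=1}^{n}x_i2^{n-i}+x_{n+1}=\sum_{i=1}^{n+1}x_i2^{n+1-i}$. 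So essentially all of the work goes into understanding how the partition of $[0,1]$ cut out by $g^{-(n+1)}(0)$ refines the one cut out by $g^{-n}(0)$.

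The heart of the argument is the refinement identity
$$\mu_{n,k}=\mu_{n+1,2k}\qquad(n\geq 1,\ 0\leq k\leq 2^{n-1}),$$
which says that each $\mu_{n,k}$ reappears at level $n+1$ with its index doubled, and hence that the interval $(\mu_{n,k},\mu_{n,k+1})$ is split by the single new point $\mu_{n+1,2k+1}$ into $(\mu_{n+1,2k},\mu_{n+1,2k+1})$ and $(\mu_{n+1,2k+1},\mu_{n+1,2k+2})$. I would prove it by induction on $n$. The base case $\mu_{1,k}=\mu_{2,2k}$ for $k\in\{0,1\}$ is the computation $g^{-1}(0)=\{0,1\}$, $g^{-2}(0)=\{0,v,1\}$. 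For the step, given $\mu_{n,k}=\mu_{n+1,2k}$ for all $k$, I would split into the ``left branch'' range $0\leq k\leq 2^{n-1}$ and the ``right branch'' range $2^{n-1}\leq k\leq 2^n$, using the elementary fact that $v=\mu_{m,2^{m-2}}$ for every $m\geq 2$ (a consequence of Remark~\ref{rem:1.1} and the fact that $g$ maps $[0,v]$ homeomorphically onto $[0,1]$), so that $\mu_{n+1,k}$ and $\mu_{n+2,2k}$ always lie on the same monotone branch $[0,v]$ or $[v,1]$ of $g$. On the left branch, Lemma~\ref{lema:2.1}(i) and the inductive hypothesis give $g(\mu_{n+2,2k})=\mu_{n+1,2k}=\mu_{n,k}=g(\mu_{n+1,k})$, and injectivity of $g$ on $[0,v]$ forces $\mu_{n+2,2k}=\mu_{n+1,k}$. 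On the right branch the same chain of equalities works after reflecting indices through Lemma~\ref{lema:2.1}(ii): writing $k=2^n-j$ with $0\leq j\leq 2^{n-1}$, one gets $g(\mu_{n+1,k})=g(\mu_{n+1,j})=\mu_{n,j}$ and likewise $g(\mu_{n+2,2k})=g(\mu_{n+2,2j})=\mu_{n+1,2j}=\mu_{n,j}$, and injectivity of $g$ on $[v,1]$ finishes it.

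With the refinement identity in hand, fix $x$ and write $k=k_n=k_n(x)$ as in Notation~\ref{not:2.2}, so that $x\in[\mu_{n,k},\mu_{n,k+1}]=[\mu_{n+1,2k},\mu_{n+1,2k+2}]$. Since $x$ lies in one of the two halves $[\mu_{n+1,2k},\mu_{n+1,2k+1}]$ or $[\mu_{n+1,2k+1},\mu_{n+1,2k+2}]$, the definition of $k_{n+1}$ gives at once $k_{n+1}\geq 2k$. For the upper bound, $k_{n+1}\geq 2k+2$ would force $x\geq\mu_{n+1,2k+2}=\mu_{n,k+1}$, hence $x=\mu_{n,k+1}$; but then either $k+1<2^{n-1}$, so $x\in[\mu_{n,k+1},\mu_{n,k+2}]$ contradicts the maximality of $k_n$, or $k+1=2^{n-1}$, i.e. $x=1$, for which one checks directly that $k_n(1)=2^{n-1}-1$ and $k_{n+1}(1)=2^n-1=2k_n(1)+1$, so $k_{n+1}\geq 2k+2$ fails again. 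Hence $k_{n+1}\in\{2k_n,2k_n+1\}$, which is exactly the recurrence needed in the first paragraph.

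I expect the main obstacle to be the bookkeeping in the ``right branch'' case of the refinement identity — the index reflections from Lemma~\ref{lema:2.1}(ii) must be carried correctly through two consecutive levels and matched against the inductive hypothesis — together with the degenerate situations in which $x$ itself is one of the partition points (most delicately $x\in\{0,1\}$), where the ``maximal $k$'' convention of Notation~\ref{not:2.2} has to be applied directly rather than through the generic interval argument.
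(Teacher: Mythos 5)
Your proof is correct. The paper itself gives no argument for this lemma (it is imported verbatim from the cited reference), so there is nothing internal to compare against; your route --- establishing the refinement identity $\mu_{n,k}=\mu_{n+1,2k}$ via Lemma~\ref{lema:2.1} and injectivity of $g$ on each monotone branch, then deducing the recurrence $k_{n+1}\in\{2k_n,\,2k_n+1\}$ and reading off the binary digits $x_{n+1}=k_{n+1}-2k_n$ --- is the natural one and all steps, including the degenerate cases $x\in g^{-\infty}(0)$ and $x=1$, check out. One caveat worth flagging: you follow the literal wording of Notation~\ref{not:2.2}(1), under which $k_1=0$ forces $x_1=0$; elsewhere the paper uses $k_n$ as the index of the level-$(n+1)$ interval (e.g.\ $\widehat{x}_n=\mu_{n+1,k_n}$, and Remark~\ref{rem:2.16} treats $x_1\in\{0,1\}$ as recording which branch of $g$ contains $x$), so the indices are shifted by one; your argument transfers verbatim to that convention (start the recurrence from $k_0=0$), so this is an inconsistency in the paper's notation rather than a gap in your proof.
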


We will call the sequence $(x_i)_{i\geq 0}$, constructed by $x\in
[0, 1]$ and unimodal map $g$, the $g$-\textbf{expansion} of $x$
(with respect to $g$). Say that $x$ is $g$-\textbf{finite}, if
there is $k$ such that $x_i=0$ for all $i>k$. Otherwise say that
$x$ is $g$-\textbf{infinite}. The next fact follows from the
construction.

\begin{lemma}\label{lema:2.4}
If $h$ is the conjugacy from a unimodal map $g_1$ to unimodal map
$g_2$, then for every $x\in [0,\, 1]$ the $g_2$-expansion of
$h(x)$ coincides with the $g_1$-expansion of $x$~\footnote{This
lemma for skew tent maps was proved in~\cite[Lemma~16]{Chaos}}.
\end{lemma}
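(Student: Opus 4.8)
The plan is to establish Lemma~\ref{lema:2.4} directly from the definitions, since the statement is essentially a bookkeeping identity once one unwinds how the $g$-expansion is built. First I would recall that the $g$-expansion of $x$ is determined entirely by the sequence of integers $(k_n)_{n\geq 1}$ from Notation~\ref{not:2.2}, where $k_n$ is the unique index with $x\in[\mu_{n,k_n}(g),\mu_{n,k_n+1}(g)]$, via the binary-type formula $k_n=\sum_{i=1}^n x_i 2^{n-i}$. So it suffices to show that the index sequence of $x$ with respect to $g_1$ equals the index sequence of $h(x)$ with respect to $g_2$; the coincidence of the digit sequences $(x_i)$ then follows because the map $(k_n)_{n\geq1}\mapsto(x_i)_{i\geq0}$ is a fixed deterministic decoding (one can recover $x_i$ from $k_i$ and $k_{i-1}$, e.g.\ $x_i=k_i-2k_{i-1}$).

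Next I would invoke Lemma~\ref{lema:1.3}, which gives $h(\mu_{n,k}(g_1))=\mu_{n,k}(g_2)$ for all $n\geq1$ and $0\leq k\leq 2^{n-1}$. Since $h$ is a homeomorphism from $g_1$ to $g_2$ and both $g_1,g_2$ are unimodal of the form~\eqref{eq:1.1}, $h$ is strictly increasing (it fixes $0$ and $1$ and is monotone; one should note it cannot be decreasing because $h(\mu_{1,0}(g_1))=\mu_{1,0}(g_2)=0$ and $h(\mu_{1,1}(g_1))=1$, or argue from the conjugacy equation that $h$ preserves the side of $v$). Consequently, for each fixed $n$, the increasing homeomorphism $h$ maps the partition points $\mu_{n,0}(g_1)<\mu_{n,1}(g_1)<\dots<\mu_{n,2^{n-1}}(g_1)$ onto $\mu_{n,0}(g_2)<\dots<\mu_{n,2^{n-1}}(g_2)$ in an order-preserving way, hence maps each interval $[\mu_{n,k}(g_1),\mu_{n,k+1}(g_1)]$ onto $[\mu_{n,k}(g_2),\mu_{n,k+1}(g_2)]$. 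Therefore $x\in[\mu_{n,k}(g_1),\mu_{n,k+1}(g_1)]$ if and only if $h(x)\in[\mu_{n,k}(g_2),\mu_{n,k+1}(g_2)]$, which says exactly that $k_n(x;g_1)=k_n(h(x);g_2)$ for every $n$.

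Finally I would combine the two observations: equal index sequences force equal digit sequences, so the $g_2$-expansion of $h(x)$ coincides with the $g_1$-expansion of $x$, as claimed. I expect no serious obstacle here — the only point requiring a little care is the minor ambiguity when $x$ is one of the partition points $\mu_{n,k}(g_1)$, so that $x$ lies in two adjacent closed intervals and $k_n$ is pinned down by the convention in Notation~\ref{not:2.2}.2 (taking the maximal such $k$, with the boundary cases $0$ and $1$ handled separately); since $h$ is an increasing homeomorphism sending partition points to partition points, this tie-breaking is preserved under $h$, so the argument goes through unchanged. The footnote reference to \cite[Lemma~16]{Chaos} for the skew tent case suggests the full proof is essentially this routine unwinding, just stated in greater generality.
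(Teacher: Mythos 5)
Your argument is correct and is precisely the unwinding the paper intends: the paper offers no written proof beyond ``follows from the construction,'' and the construction it has in mind is exactly your chain (Lemma~\ref{lema:1.3} gives $h(\mu_{n,k}(g_1))=\mu_{n,k}(g_2)$, $h$ is increasing since it fixes $0$, so the index sequences $k_n$ agree, and $x_i=k_i-2k_{i-1}$ recovers the digits). Nothing further is needed.
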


\begin{notation}
For every $i,j:\, 0\leq i<j$ denote $k_{i,j} =
\sum\limits_{t=i}^{j}x_t2^{j-t}$. Remark that $k_n = k_{0,n} =
k_{1,n}$.
\end{notation}

\begin{remark}\label{rem:2.6}
Notice that $I_{n+1,k_n} = (\iks{n}, \iksp{n})$ and $I_{n+2,2k_n}
= (\iks{n}, \ikspm{n})$.
\end{remark}

\begin{notation}\label{not:2.7}
For any $n\geq 1$ and $k,\, 0\leq k< 2^n$ denote
$$\delta_{n,k} =
\frac{\mu_{n+1,2k+1}-\mu_{n,k}}{\mu_{n,k+1}-\mu_{n,k}} =
\frac{\lI_{n+1,2k}}{\lI_{n,k}}.$$ If $I$ is an interval of the
form $I = I_{n,k}$ then write $\delta(I)$ for $\delta_{n,k}$. This
will be convenient, for example, in the expression
$\delta(g(I_{n,k}))$.
\end{notation}

The next fact follows from the definitions.

\begin{remark}\label{rem:2.8}
For every $n\geq 0$ we have  $ \ikspm{n} = \iks{n}
+\delta_{n+1,k_n}\cdot \lI_{n+1,k_n}.$
\end{remark}

\begin{remark}\cite[Remark~4.6]{Chaos}\label{rem:2.9}
Let $n\geq 0$ and $x_{n+1}=0$. Then:
$$\begin{array}{ll}
(i)\ \iks{n+1}  =\iks{n}; & %
\hskip 1 cm(ii)\ \iksp{n+1} = \ikspm{n+1}.
\end{array}$$
\end{remark}

\begin{remark}\cite[Remark~4.7]{Chaos}\label{rem:2.10}
Let $n\geq 0$ and $x_{n+1}=1$. Then:
$$\begin{array}{lll}
(i)\ \widehat{x}_{n+1} =\widehat{x}_n^{\, \pm}; & %
\hskip 1 cm(ii)\ \widehat{x}_{n+1}^{\, +} =\widehat{x}_n^{\, +}.
\end{array}$$
\end{remark}

\begin{lemma}\label{lema:2.11}
For any $n\geq 1$ we have that:

(i) $\iks{n+1} =\widehat{x}_n + x_{n+1}\cdot \delta_{n+1,k_n}\cdot
\lI_{n+1,k_n}$

(ii) $\iksp{n+1} =\iksp{n} - (1-x_{n+1})\cdot
(1-\delta_{n+1,k_n})\cdot \lI_{n+1,k_n}$;

\end{lemma}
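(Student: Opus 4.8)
The plan is to fix $n\geq 1$ and reduce both identities to the elementary recursion that governs the $g_1$-expansion, namely $k_{n+1}=2k_n+x_{n+1}$, together with the observation that $\lI_{n+1,k_n}=\iksp{n}-\iks{n}$, which is immediate from Notation~\ref{not:2.2} (and Remark~\ref{rem:2.6}). Since $x_{n+1}\in\{0,1\}$, it then suffices to verify each of (i) and (ii) separately in the two cases $x_{n+1}=0$ and $x_{n+1}=1$; in each case the right-hand side collapses because one of its summands carries a vanishing factor, and it must be matched against the value of $\iks{n+1}=\mu_{n+2,k_{n+1}}$, respectively $\iksp{n+1}=\mu_{n+2,k_{n+1}+1}$, read off from the index recursion.

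Consider first $x_{n+1}=1$. Then $k_{n+1}=2k_n+1$, so $\iks{n+1}=\mu_{n+2,2k_n+1}=\ikspm{n}$, and by Remark~\ref{rem:2.8} this equals $\iks{n}+\dI{n+1}{k_n}\cdot\lI_{n+1,k_n}=\widehat{x}_n+\dI{n+1}{k_n}\cdot\lI_{n+1,k_n}$, which is exactly the right-hand side of (i) when $x_{n+1}=1$; alternatively one may simply quote Remark~\ref{rem:2.10}(i), which gives $\iks{n+1}=\ikspm{n}$ directly. For (ii) in this case $k_{n+1}+1=2k_n+2$, and Remark~\ref{rem:2.10}(ii) gives $\iksp{n+1}=\iksp{n}$, which matches the right-hand side of (ii) since there the correction term carries the factor $1-x_{n+1}=0$.

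Now consider $x_{n+1}=0$. Then $k_{n+1}=2k_n$, so Remark~\ref{rem:2.9}(i) yields $\iks{n+1}=\iks{n}=\widehat{x}_n$, which is the right-hand side of (i) since there the correction term carries the factor $x_{n+1}=0$. For (ii), $k_{n+1}+1=2k_n+1$ gives $\iksp{n+1}=\mu_{n+2,2k_n+1}=\ikspm{n}$; applying Remark~\ref{rem:2.8} and then substituting $\iks{n}=\iksp{n}-\lI_{n+1,k_n}$ rewrites this as $\iksp{n}-\bigl(1-\dI{n+1}{k_n}\bigr)\cdot\lI_{n+1,k_n}$, which is the right-hand side of (ii) when $x_{n+1}=0$. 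This exhausts the four verifications and proves the lemma.

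The argument is essentially a bookkeeping exercise, so the only places that demand care are keeping the subscripts of the points $\mu_{n+1,\cdot}$ and $\mu_{n+2,\cdot}$ consistent under $k_{n+1}=2k_n+x_{n+1}$, and the one small algebraic rearrangement in the last case that converts the additive normalization around $\iks{n}$ supplied by Remark~\ref{rem:2.8} into the additive normalization around $\iksp{n}$ required by the statement of (ii). It is also worth noting in passing that the indices that occur here, namely $k_n$ and $k_n+1$ at level $n+1$ and $2k_n$ and $2k_n+1$ at level $n+2$, always lie in the admissible ranges $\{0,\dots,2^{n-1}\}$ and $\{0,\dots,2^{n}\}$, so no boundary exceptions arise — in contrast with expressions such as $\widehat{x}_n^{\,-}$, which are not needed for this statement.
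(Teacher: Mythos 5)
Your proof is correct and follows essentially the same route as the paper: a case split on $x_{n+1}\in\{0,1\}$, invoking Remarks~\ref{rem:2.9} and~\ref{rem:2.10} for the degenerate cases and Remark~\ref{rem:2.8} together with $\iks{n}=\iksp{n}-\lI_{n+1,k_n}$ (Remark~\ref{rem:2.6}) for the remaining two. The only difference is that you additionally spell out the index recursion $k_{n+1}=2k_n+x_{n+1}$, which the paper leaves implicit in the cited remarks.
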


\begin{proof}
If $x_{n+1} =0$, then~(i) follows from~(i) of
Remark~\ref{rem:2.9}. If $x_{n+1} =1$ then, by~(i) of
Remark~\ref{rem:2.10}, we have $\iks{n+1} = \ikspm{n}$, and part
(i) follows from Remark~\ref{rem:2.8}.

If $x_{n+1} =1$, then~(ii) follows from (ii) of
Remark~\ref{rem:2.10}. If $x_{n+1} =0$, then, by~(i) of
Remark~\ref{rem:2.9}, $\iksp{n+1} = \ikspm{n}$. Now, by
Remark~\ref{rem:2.8}, $$ \ikspm{n} =\iks{n} +\delta_{n+1,k_n}\cdot
\lI_{n+1,k_n} \stackrel{\text{by Rem.~\ref{rem:2.6}}}{=} \iksp{n}
-\lI_{n+1,k_n} +\delta_{n+1,k_n}\cdot \lI_{n+1,k_n},
$$ which is necessary.
\end{proof}

\begin{notation}
1. For every $t\in [0, 1]$ denote $\Rot(t) = 1-t$.

2. For every $n\geq 1$ and $t\in \{0,\ldots, 2^{n}-1\}$ denote
$\Rot_n(t) = 2^n-t-1$.
\end{notation}

\begin{remark}\label{rem:2.13}
If $t = \sum\limits_{i=1}^{n}t_i\cdot 2^{n-i}$ is the binomial
expansion of $t$, then $\Rot_n(t) =
\sum\limits_{i=1}^{n}\Rot(t_i)\cdot 2^{n-i}$. In other words,
$\Rot_n$ inverts the binomial digits of a number, whose binomial
expansion consists of $n$ digits.
\end{remark}

\begin{proof}
Notice that $\sum\limits_{i=1}^{n}t_i\cdot 2^{n-i} +
\sum\limits_{i=1}^{n}\Rot(t_i)\cdot 2^{n-i} =
\sum\limits_{i=1}^{n}2^{n-i} = 2^n-1$, and the fact follows.
\end{proof}

\begin{remark}\label{rem:2.14}
For any $x_1, x_2\in \{0; 1\}$ and $t\in [0, 1]$ we have that
$
\Rot^{\Rot^{x_1}(x_2)}(t) = \Rot^{x_1+x_2}(t),
$ where the power %
of $\Rot$ denotes its iteration.
\end{remark}

\begin{proof}
The proof follows immediately from the dichotomy whether $x_1
=x_2$, or not.
\end{proof}

\begin{remark}\label{rem:2.15}
For every $n\geq 1$ we have that $ \lI_{n+1,k_n} =
\lI_{n,k_{n-1}}\cdot \Rot^{x_n}(\delta_{n,k_{n-1}}).
$
\end{remark}

\begin{proof}
By Lemma~\ref{lema:2.11}, $$ \lI_{n+1,k_n} =\iksp{n} -\iks{n} =$$
$$= (\iksp{n-1} - (1-x_{n})\cdot (1-\delta_{n,k_{n-1}})\cdot
\lI_{n,k_{n-1}}) -(\iks{n-1} + x_{n}\cdot \delta_{n,k_{n-1}}\cdot
\lI_{n,k_{n-1}})=$$$$= (x_n -2\cdot x_n\cdot \delta_{n,k_{n-1}}
+\delta_{n,k_{n-1}})\cdot \lI_{n,k_{n-1}}.
$$ If $x_n=1$, %
then $\lI_{n+1,k_n} = (1 -\delta_{n,k_{n-1}})\cdot
\lI_{n,k_{n-1}}$; if $x_1=0$, then $\lI_{n+1,k_n} =
\delta_{n,k_{n-1}}\cdot \lI_{n,k_{n-1}}$, and we are done.
\end{proof}

\begin{remark}\label{rem:2.16}
For any $n\geq 1$ one have that $ g(I_{n+1,k_n}) =
I_{n,\Rot_{n-1}^{x_1}(k_{2,n})}. $
\end{remark}

\begin{proof}
If $x_1=0$, then $g$ increase on $I_{n+1,k_n}$, whence the left
point of $g(I_{n+1,k_n})$ is
$$g(\mu_{n+1,k_n})
\stackrel{\text{(i) of Lem.~\ref{lema:2.1}}}{=} \mu_{n-1,k_n}
\stackrel{x_1 =0}{=} \mu_{n-1,\Rot_{n-1}^{x_1}(k_{2,n})}.
$$

If $x_1=1$, then then $g$ decrease on $I_{n+1,k_n}$, and the left
point of $g(I_{n+1,k_n})$ is
$$g(\mu_{n+1,k_n+1})
\stackrel{\text{(ii) of Lem.~\ref{lema:2.1}}}{=}
g(\mu_{n+1,2^n-k-1}) \stackrel{\text{Rem.~\ref{rem:2.13}}}{=}
\mu_{n-1,\Rot_{n-1}(k_{2,n})}.
$$
\end{proof}

\begin{remark}\label{rem:2.17}
For any $n\geq 1$ and all $i, 1\leq i\leq n$, one have $
g^i(I_{n+1,k_n}) = I_{n+1-i,\Rot_{n-i}^{x_i} (k_{i+1, n})}. $
\end{remark}

\begin{proof}First write
$ I_{n+1,k_n} \stackrel{\text{Rem.~\ref{rem:2.16}}}{=}
I_{n,\Rot_{n-1}^{x_1}(k_{2,n})}$. Next,
$$
g(I_{n,\Rot_{n-1}^{x_1}(k_{2,n})})
\stackrel{\text{Rem.~\ref{rem:2.16}}}{=}
I_{n-1,\Rot^{\Rot^{x_1}(x_2)}(\Rot_{n-2}^{x_1}(k_{3,n}))}
\stackrel{\text{Rem.~\ref{rem:2.14}}}{=} $$
$$=I_{n-1,\Rot^{x_1 +x_2}\circ \Rot_{n-3}^{x_1}(k_{3,n})} =
I_{n-1,\Rot_{n-2}^{x_2}(k_{3,n})}
$$ and the result follows by induction on $i$.
\end{proof}

Denote by $n_0$ the minimal natural number such that $g^{-n_0}(0)$
contains all the kinks of $g$.

\begin{remark}\label{rem:2.18}
For any $n\geq n_0-1$ and $k,\, 0\leq k< 2^n$ with the binary
expansion $k_n = \sum\limits_{i=0}^{n}x_i2^{n-i}$ we have that $
\delta_{n,k} = \Rot^{x_1}(\delta(g(I_{n,k}))). $
\end{remark}

\begin{proof}
Remark, that $g$ increase on $I_{n,k}$ if and only if $x_1 =0$.
Now our statement follows from the linearity of $g$ on $I_{n,k}$.
\end{proof}

\begin{remark}\label{rem:2.19}
For every $n\geq n_0$ we have that $\delta_{n+1,k_n}
=\Rot^{x_{n-n_0+2}}(
\delta_{n_0-1,\Rot_{n_0-1}^{x_{n-n_0+2}}(k_{n-n_0+2,n})}).$
\end{remark}

\begin{proof}
By Remark~\ref{rem:2.18},
$$\delta(I_{n+1,k}) = %
\Rot^{x_1}(\delta(g(I_{n+1,k})))
\stackrel{\text{Lem.~\ref{rem:2.17}}}{=}
\Rot^{x_1}(\delta(I_{n,\Rot_n^{x_1}(k_n)})) %
\stackrel{\text{Rem.~\ref{rem:2.18}}}{=} $$
$$
\Rot^{x_1}\circ
\Rot^{\Rot^{x_1}(x_2)}(\delta(I_{n-1,\Rot_{n-1}^{x_2}(k_{2,n})}))
\stackrel{\text{Rem.~\ref{rem:2.14}}}{=}
\Rot^{x_2}(\delta(I_{n-1,\Rot_{n-1}^{x_2}(k_{2,n})})).$$ It
follows now by induction that $$ \delta(I_{n+1,k}) = \Rot^{x_{i}}(
\delta(I_{n+1-i,\Rot_{n-i+1}^{x_{i}}(k_{i,n}}))
$$ for all $i\leq n$. Plug $i = n-n_0+2$ into the latter equality,
and we are done.
\end{proof}

\begin{remark}\label{rem:2.20}
For any $n\geq 1$ we have
$$\lI_{n+1,k_n} = \lI_{n,k_{n-1}}\cdot \Rot^{x_n +x_{n-n_0+1}}(
\delta_{\Rot_{n_0-1}^{x_{n-n_0+1}}(k_{n-n_0+1, n-1})}).
$$
\end{remark}

\begin{proof}
By Remark~\ref{rem:2.15}, $ \lI_{n+1,k_n} =\lI_{n,k_{n-1}}\cdot
\Rot^{x_n}(\delta_{n,k_{n-1}}), $ and the result follows from
Remark~\ref{rem:2.19}.
\end{proof}

\begin{notation}\label{not:2.21}
For any $k\geq 0$ denote $\delta_k = \delta(I_{n_0-1,k^*}),$ where
$k^*, 0\leq k^*< 2^{n_0-2}$ is such that $k -k^*$ is divisible by
$2^{n_0-2}$.
\end{notation}

The next fact follows from Remark~\ref{rem:2.20}.

\begin{remark}\label{rem:2.22}
For all $n\geq n_0-1$ and $k,\, 0\leq k\leq 2^{n-2}-1$ we have
$\delta_k = \delta_{n,k}$.
\end{remark}

\section{The main results}\label{sec:05}

\subsection{Technical computation}

Let a carcass map $g$ and a point
 $x\in [0, 1]$ be fixed till the end of
the section. As above, let $n_0$ be the minimal natural number
such that $g^{-n_0}(0)$ contains all the kinks of $g$.

\begin{notation}\label{not:3.2}
Denote $\mathcal{D} =\{ \delta_k,\, 0\leq k< 2^{n_0-2}\}$, where
$\delta_k$ are defined in Notation~\ref{not:2.21}. Write
$$ \delta_- =\min\limits_{\delta\in \mathcal{D}}\{ \delta,\, 1-\delta\}\text{ and }
\delta^+ =\max\limits_{\delta\in \mathcal{D}}\{ \delta,\,
1-\delta\}.
$$
\end{notation}

The following remark follows from Remark~\ref{rem:2.20}.

\begin{remark}\label{rem:3.3}
For any $n\geq n_0$ and any $p$ and $k$ such that $I_{n, k}\subset
I_{n_0-1,p}$ we have $$\lI_{n_0-1, p}\cdot
(\delta_-)^{n-n_0+1}\leq \lI_{n,k} \leq \lI_{n_0-1, p}\cdot
(\delta^+)^{n-n_0+1}.
$$
\end{remark}

\begin{remark}[Proof of Theorem~\ref{th:3}]
Notice, that Theorem~\ref{th:3} readily follows from
Remark~\ref{rem:3.3}.
\end{remark}

\begin{remark}\label{rem:3.5}
If $x_{n-n_0+1} = \ldots =x_n$, then $\lI_{n+1,k_n} =
\lI_{n,k_{n-1}}\cdot \delta_0.$
\end{remark}

\begin{proof}
This is the direct consequence of Remark~\ref{rem:2.20}.
\end{proof}

\begin{lemma}\label{lema:3.6}
Suppose that $n>n_0$ is such that $x_{n+1} = 1$. Denote $t\geq 0$
such that $x_{n+t+2}=1$ is the first one of the $g$-expansion of
$x$ after $x_{n+1}$. Then the following implications hold:

(i) For any $i,\, 0\leq i\leq n_0+1$ we have $ \lI_{n+1} \cdot
(\delta_-)^{i+1} \leq \lI_{n+i+2}^- \leq \lI_{n+1}\cdot
(\delta^+)^{i+1}. $

(ii) For every $i,\, n_0+2\leq i\leq t$ we have that
$$
\lI_{n+1}\cdot (\delta_-)^{n_0+2}\cdot \delta_0^{i-n_0-1} \leq
\lI_{n+i+2}^- \leq \lI_{n+1}\cdot (\delta^+)^{n_0+2}\cdot
\delta_0^{i-n_0-1}.
$$

(iii) For every $i,\, n_0\leq i\leq t-n_0+1$ we have that
$\lI_{p+i+1} = \lI_{n+n_0+1} \cdot \delta_0^{i-n_0}.$
\end{lemma}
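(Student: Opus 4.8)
The plan is to track the lengths $\lI_{n+1+j}^-$ and $\lI_{n+1+j}$ through the $g$-expansion of $x$ using the recursive formula of Remark~\ref{rem:2.20}, distinguishing the three regimes of the index $i$ that correspond to the three parts of the statement. Recall that by Notation~\ref{not:2.2} the quantity $\lI_{m}^-$ refers to the interval $I_{m,k_{m-1}-1}$, i.e. the interval immediately to the \emph{left} of the one containing $x$ at level $m$; since $x_{n+1}=1$, this ``left neighbour'' interval is exactly $I_{n+2,2k_n-1}$, and thereafter it evolves by the same one-step recursion $\lI_{m+1,\bullet} = \lI_{m,\bullet}\cdot \Rot^{x_m + x_{m-n_0+1}}(\delta_{\bullet})$ of Remark~\ref{rem:2.20}, but applied to the digit string of the \emph{left neighbour}, which differs from the string $(x_i)$ of $x$ in a controlled way (it agrees with $(x_i)$ up to index $n$, then has a $0$ where $x$ has a $1$ at position $n+1$, and then is all $1$'s until position $n+t+2$). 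So the first bookkeeping step is to write down explicitly the $g$-expansion of the point defining $\lI^-$ and confirm that between levels $n+2$ and $n+t+2$ the relevant window of $n_0$ consecutive digits is, first, a mix (for the initial $n_0+1$ steps, part~(i)), and then stabilises to constant digits (all $1$'s), which is precisely when Remark~\ref{rem:3.5} kicks in and each further step contributes the single factor $\delta_0$ (part~(ii)).

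For part~(i) I would simply iterate Remark~\ref{rem:2.20} $i+1$ times starting from $\lI_{n+1}$ (equivalently, use Remark~\ref{rem:3.3} with $p$ the index of the containing level-$(n_0-1)$ interval — but here more directly, each factor $\Rot^{x_m+x_{m'}}(\delta_\bullet)$ lies in $[\delta_-,\delta^+]$ by Notation~\ref{not:3.2}), giving the two-sided bound with exponents $i+1$. For part~(ii), the point is that once $i\geq n_0+2$ the sliding window of $n_0$ digits used in Remark~\ref{rem:2.20} consists entirely of the block of $1$'s (this is where the hypothesis $i\le t$, i.e.\ before the next $1$ of $x$, is used), so by Remark~\ref{rem:3.5} each step from level $n+n_0+3$ onward multiplies the length by exactly $\delta_0$; combining this with the bound from the first $n_0+2$ steps (part~(i) with $i=n_0+1$) yields the stated estimate with the factor $(\delta_-)^{n_0+2}$ resp.\ $(\delta^+)^{n_0+2}$ and the power $\delta_0^{i-n_0-1}$. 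Part~(iii) should be an \emph{exact} identity, not an estimate: in the range $n_0\le i\le t-n_0+1$ both endpoints of the interval $I_{n+i+1}$ and of its predecessors lie strictly inside the region where all the relevant digits of $x$ are equal, so Remark~\ref{rem:3.5} applies at every step and the length is multiplied by exactly $\delta_0$ each time, giving $\lI_{n+i+1}=\lI_{n+n_0+1}\cdot\delta_0^{i-n_0}$ by an immediate induction on $i$.

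The main obstacle I anticipate is purely combinatorial rather than analytic: getting the index shifts exactly right. One must be careful that $\lI^-$ is governed by the $g$-expansion of a \emph{different} point than $x$ (the left endpoint neighbour), that the window in Remark~\ref{rem:2.20} is $\{x_{n-n_0+1},\dots,x_n\}$ so ``the last $n_0$ digits are constant'' translates into a precise range of levels, and that the switch-over between the ``mixed'' regime (part~(i)) and the ``pure $\delta_0$'' regime (parts~(ii),(iii)) happens exactly when $n_0$ consecutive digits have entered the block of $1$'s. I would therefore spend most of the write-up carefully setting up the digit strings and the level/index correspondence; once that is fixed, all three parts follow by routine induction from Remarks~\ref{rem:2.20} and~\ref{rem:3.5} together with the definitions of $\delta_-,\delta^+$ in Notation~\ref{not:3.2}. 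Note also that (iii) as stated writes $\lI_{p+i+1}$ and $\lI_{n+n_0+1}$; I read $p=n$ here (a typo for $n$), consistent with parts~(i)--(ii), and would state it with $n$ throughout.
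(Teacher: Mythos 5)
Your plan is correct and follows essentially the same route as the paper: part (i) by iterating the one-step length recursion (equivalently Remark~\ref{rem:3.3}) $i+1$ times to get the $(\delta_-)^{i+1}$ and $(\delta^+)^{i+1}$ bounds, part (ii) by observing that for $i\geq n_0+2$ the relevant window of digits of the left-neighbour index is constant so Remark~\ref{rem:3.5} contributes an exact factor $\delta_0$ per step on top of the part-(i) bound at $i=n_0+1$, and part (iii) as an exact identity from Remark~\ref{rem:3.5}. Your reading of $p$ as a typo for $n$ in (iii), and your tracking of the digit string of $k_{n+i+1}-1$ (all $1$'s) rather than of $k_{n+i+1}$ (all $0$'s) as the paper does, are both consistent with the paper's argument since Remark~\ref{rem:3.5} yields $\delta_0$ in either case.
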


\begin{proof}
1. Since $x_{i+1} =1$, then $\overline{I_{n+2}\cup I_{n+2}^-} =
\overline{I_{n+1}}$. Thus, $I_{p+i+2}^-\subset I_{p+1}$, and
part~(i) follows from Remark~\ref{rem:3.3}.

2. Remind that $I_{n+i+2}^- = I_{n+i+2,k_{n+i+1}-1}$. The last
binary digits of $k_{n+i+1}$ are $x_{n+i-n_0}$, $\ldots$,
$x_{n+i+1}$, which are $0$-s, since $n_0+2\leq i\leq t$. Thus,
Remark~\ref{rem:3.5} implies $\lI_{n+i+2}^- = \lI_{n+i+1}^- \cdot
\delta_0$ and, by induction, $\lI_{n+i+2}^- = \lI_{n+n_0+3}^-
\cdot \delta_0^{i-n_0-1}$. Now part~(ii) follows from part~(i).

3. The part (iii) follows from Remark~\ref{rem:3.5} due to the
construction of $t$.
\end{proof}

Write $g_1$ for $g$, and let $g_2$ be one more carcass map, which
will be fixed to the end of the section. Denote by $n_0$ the
minimal natural number such that $g_i^{-n_0}(0)$ contains all the
kinks of $g_i$, where $i\in \{1,\, 2\}$. For every $n\geq 1$ every
$k,\, 0\leq k\leq 2^{n-1}$ and $i\in \{1;\, 2\}$ define
$\mu_{n,k}(g_i)$, $I_{n,k}(g_i)$, $I_{n,k}^-(g_i)$,
$\lI_{n,k}(g_i)$, $\delta_{n,k}(g_i)$, $v(g_i)$,
$\mathcal{V}(g_i)$, $\delta_-(g_i)$ and $\delta^{+}(g_i)$ as in
Notations~\ref{not:2.2}, \ref{not:2.7} and~\ref{not:3.2}. Let $h$
be the conjugacy from $g_1$ to $g_2$.

The construction of Ulam's approximation implies the next:

\begin{remark}\label{rem:3.7}
If $x\notin g_1^{-n}(0)$ for some $n\geq 1$ then
$$
h_{n+1}'(x) =\frac{h(\iksp{n})
-h(\widehat{x}_n)}{\widehat{x}_n^{\, +} -\widehat{x}_n}.
$$
\end{remark}

The next fact follows from Lemma~\ref{lema:2.4}, part (iii) of
Lemma~\ref{lema:3.6}, and Remark~\ref{rem:3.7}.

\begin{remark}\label{rem:3.8}
For every $i\geq 1$ and $n\geq 1$, $$ \left(
\frac{\delta_-(g_2)}{\delta^+(g_1)}\right)^{i} \cdot
h_{n+1}'(x)\leq h_{n+i}'(x)\leq \left(
\frac{\delta^+(g_2)}{\delta_-(g_1)}\right)^{i} \cdot h_{n+1}'(x),
$$
whenever $x\notin g_1^{-n-i}(0)$.
\end{remark}

By Remarks~\ref{rem:3.5} and~\ref{rem:3.7} we have:

\begin{remark}\label{rem:3.9}
Suppose that $n>n_0$ is such that $x_{n+1} = 1$. Denote $t\geq 0$
such that $x_{n+t+2}=1$ is the first one of the $g$-expansion of
$x$ after $x_{n+1}$. Then for every $i,\, n_0\leq i\leq t-n_0+1$
we have
$$
h_{n+i}'(x) = h_{n+n_0}'(x)\cdot \left(
\frac{\delta_0(g_2)}{\delta_0(g_1)}\right)^{i-n_0}.
$$
\end{remark}

The next lemma will be an important step in the proof of
Theorem~\ref{th:4}.

\begin{lemma}\label{lema:3.10}
Suppose that $g_1^{n+1}(x)=0$ and $ s\in [\widehat{x}_n^{\, -},\,
\widehat{x}_n^{\, \mp})$ for $n>n_0$. Then
$$ \delta_-(g_2)\cdot \widehat{h}_{n+1}'(x-)\leq
\frac{h(\widehat{x}_n)-h(s)}{\widehat{x}_n-s}\leq
\widehat{h}_{n+1}'(x-)\cdot \frac{1}{\delta_-(g_1)}.
$$
\end{lemma}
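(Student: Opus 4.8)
The idea is to estimate the slope of the chord of $h$ from $s$ to $\widehat{x}_n$ by comparing it with the slope $\widehat{h}_{n+1}'(x-)$, which by Remark~\ref{rem:3.7} (applied with the left-hand analogues, i.e.\ using the interval $I_n^-$) equals $\bigl(h(\widehat{x}_n)-h(\widehat{x}_n^{\,-})\bigr)/\bigl(\widehat{x}_n-\widehat{x}_n^{\,-}\bigr)$. Since $s\in[\widehat{x}_n^{\,-},\widehat{x}_n^{\,\mp})$ and the endpoints $\widehat{x}_n^{\,-},\widehat{x}_n^{\,\mp},\widehat{x}_n$ all lie in $g_1^{-\infty}(0)$, Lemma~\ref{lema:2.4} tells us that $h$ maps this configuration to the corresponding configuration for $g_2$; in particular $h$ is \emph{monotone} on $[\widehat{x}_n^{\,-},\widehat{x}_n]$ and maps the nested interval structure to the nested interval structure. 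So the chord slope from $s$ to $\widehat{x}_n$ is sandwiched between the chord slope over the largest subinterval $[\widehat{x}_n^{\,-},\widehat{x}_n]$ and the one over a smaller subinterval, and the ratios of these are controlled by the $\delta$-bounds.

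Concretely, I would proceed as follows. First, write $A = h(\widehat{x}_n)-h(\widehat{x}_n^{\,-})$ (the numerator of $\widehat{h}_{n+1}'(x-)$) and $B = \widehat{x}_n - \widehat{x}_n^{\,-} = \lI_n^-(x)$. For the \emph{upper} bound: since $h$ is monotone and $s\ge \widehat{x}_n^{\,-}$, we have $h(\widehat{x}_n)-h(s)\le h(\widehat{x}_n)-h(\widehat{x}_n^{\,-}) = A$, while $\widehat{x}_n - s \ge \widehat{x}_n - \widehat{x}_n^{\,\mp}$ because $s<\widehat{x}_n^{\,\mp}$. Now $\widehat{x}_n - \widehat{x}_n^{\,\mp}$ is a sub-length of $[\widehat{x}_n^{\,-},\widehat{x}_n]$ of relative size $1-\delta(I_n^-(x))\ge \delta_-(g_1)$ (using Notation~\ref{not:2.7} and the definition of $\delta_-$), so $\widehat{x}_n - s \ge \delta_-(g_1)\cdot B$. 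Dividing gives the chord slope $\le A/(\delta_-(g_1)\,B) = \widehat{h}_{n+1}'(x-)/\delta_-(g_1)$, as required. For the \emph{lower} bound: again by monotonicity $h(\widehat{x}_n)-h(s)\ge h(\widehat{x}_n)-h(\widehat{x}_n^{\,\mp})$, and this equals $A$ times the relative size (on the $g_2$ side) of the corresponding subinterval, which by Lemma~\ref{lema:2.4} is again governed by a single $\delta(g_2)$-value and hence is $\ge \delta_-(g_2)\cdot A$; meanwhile $\widehat{x}_n - s \le \widehat{x}_n - \widehat{x}_n^{\,-} = B$. Dividing yields chord slope $\ge \delta_-(g_2)\,A/B = \delta_-(g_2)\cdot \widehat{h}_{n+1}'(x-)$.

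The main obstacle, and the step requiring care, is the bookkeeping that identifies which $\delta$-parameter controls the ratio $(\widehat{x}_n-\widehat{x}_n^{\,\mp})/(\widehat{x}_n-\widehat{x}_n^{\,-})$ and its image under $h$. One must check that $\widehat{x}_n^{\,\mp}$ is indeed the midpoint-type subdivision point of the interval $[\widehat{x}_n^{\,-},\widehat{x}_n]=\overline{I_n^-(x)}$ at the next level, so that this ratio is exactly $1-\delta(I_n^-(x))$ on the $g_1$ side and $1-\delta(h(I_n^-(x)))$ on the $g_2$ side (using Lemma~\ref{lema:2.4} to match the combinatorial labels $\mu_{n,k}(g_1)\mapsto\mu_{n,k}(g_2)$, so that $h$ sends $\widehat{x}_n^{\,-},\widehat{x}_n^{\,\mp},\widehat{x}_n$ to the corresponding points for $g_2$). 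Once this identification is in place, both ratios lie in $[\delta_-,\delta^+]$ by Notation~\ref{not:3.2}, and the two inequalities follow by the elementary chord-slope estimates above; the hypothesis $n>n_0$ is what guarantees we are past the finitely many "irregular" levels so that Remark~\ref{rem:2.22} applies and the $\delta$'s are the ones collected in $\mathcal{D}$.
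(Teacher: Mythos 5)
Your argument is correct and is essentially the paper's own proof: both sandwich the chord slope between the slopes obtained by pushing $s$ to the two extreme positions $\widehat{x}_n^{\,-}$ and $\widehat{x}_n^{\,\mp}$, and then control the resulting length ratios on the $g_1$- and $g_2$-sides by $\delta_-(g_1)$ and $\delta_-(g_2)$ via Lemma~\ref{lema:2.4}. The only slip is an off-by-one in indexing: $[\widehat{x}_n^{\,-},\widehat{x}_n]$ is $\overline{I_{n+1,k_n-1}}=\overline{I_{n+1}^-(x)}$, not $\overline{I_n^-(x)}$, so the controlling ratio is $1-\delta_{n+1,k_n-1}$; this does not affect the bounds.
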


\begin{proof}
Since $g_1^{n+1}(x)=0$, then $x = \widehat{x}_n$. Denote
$A(\widehat{x}_n^{\, -}, h(\widehat{x}_n^{\, -}))$, $S(s, h(s))$,
$X(x,\, h(x))$, $S_-(\widehat{x}_n^{\, -}, h(\widehat{x}_n^{\,
\mp}))$ and $S^+(\widehat{x}_n^{\, \mp}, h(\widehat{x}_n^{\, -}))$
(see Fig.~\ref{fig-1}a).

Also let $k_{S_-X}$ be the tangent of $S_-X$, let $k_{SX}$ be
$k_{SX}$ and let $k_{S^+X}$ be the tangent  of $S^+X$. Then
 $$ k_{S_-X}\leq k_{SX}\leq k_{S^+X},
$$ because
$ s\in [\widehat{x}_n^{\, -},\, \widehat{x}_n^{\, \mp})$ and $h$
increase.

By Remark~\ref{rem:3.3} and Lemma~\ref{lema:2.4}
$$
k_{S_-X}\geq \frac{(h(x)-h(\widehat{x}_n^{\, -}))\cdot
\delta_-(g_2)}{x- \widehat{x}_n^{\, -}}
$$ and $$
k_{S^+X} \leq \frac{h(x)-h(\widehat{x}_n^{\, -})}{(x -x_n^{\,
-})\cdot \delta_-(g_1)}.
$$ %
Now lemma follows from Remark~\ref{rem:3.7}.
\end{proof}

\begin{lemma}\label{lema:3.11}
Suppose that $x$ is $g_1$-infinite and $ s\in [\widehat{x}_n,\,
\widehat{x}_n^{\, +})$ for $n>n_0$. Assume that $x_{n+1} = 1$, and
let $t\geq 0$ be such that $x_{n+t+2}=1$ is the first one of the
$g$-expansion of $x$ after $x_{n+1}$. Then there exist $k_-$ and
$k^+$, independent on $x$, and $i\geq 1$ such that
$$
k_-\cdot h_{n+i}'(\widehat{x}_n-)  \leq
\frac{h(\widehat{x}_n)-h(s)}{\widehat{x}_n-s} \leq k^+\cdot
h_{n+i}'(\widehat{x}_n-)\ .
$$
\end{lemma}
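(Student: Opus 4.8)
The plan is to adapt the box estimate used for Lemma~\ref{lema:3.10}. There the difference quotient of $h$ was sandwiched by trapping a point $(s,h(s))$ inside a rectangle with one corner at $(\iks{n},h(\iks{n}))$; here I would do the same, but with the rectangle being a dyadic-type sub-box of $I_{n+1}(x)=(\iks{n},\iksp{n})$, which yields a two-sided bound of $\frac{h(\iks{n})-h(s)}{\iks{n}-s}$ in terms of one length ratio. The remaining, decisive task is to rewrite that ratio as $h_{n+i}'(\iks{n}-)$ at the cost of a factor depending only on $g_1,g_2$, and this is where the carcass hypothesis enters.

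First I would fix the sub-intervals. As $x_{n+1}=1$, the point $x$ lies in the right half $(\ikspm{n},\iksp{n})$ of $I_{n+1}(x)$, and $\iks{n}$ is the common left endpoint of the nested intervals $J_0\supset J_1\supset\cdots$ with $J_0=I_{n+1,k_n}(g_1)$ and $J_{m+1}$ the left child of $J_m$; thus $|J_{m+1}|/|J_m|=\delta(J_m)\in[\delta_-(g_1),\delta^+(g_1)]$, $|J_m|\to 0$, and by Lemma~\ref{lema:2.4} the image $h(J_m)$ is the analogous level-$(n+m+1)$ interval for $g_2$. We may assume $s\neq\iks{n}$ (otherwise the quotient is vacuous) and, for the moment, $\iks{n}\neq 0$ (if $\iks{n}=0$ the argument is literally the same, reading $h_{n+i}'(\iks{n}-)$ as $h_{n+i}'(\iks{n}+)$ via the convention $L_h(0)=R_h(0)$). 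Then there is a largest $d\geq 0$ with $s\in J_d$, and $s$ lies between the subdivision point $m_d$ of $J_d$ (the right endpoint of $J_{d+1}$) and the right endpoint $r_d$ of $J_d$. Since $h$ is increasing, $h(J_{d+1})$ is the left child of $h(J_d)$, and all ratios $\delta(\cdot)$ of $g_1$ and $g_2$ lie between $\delta_-$ and $\delta^+$ (Remark~\ref{rem:2.22}, Notation~\ref{not:3.2}), comparing the numerator with $|h(J_{d+1})|,|h(J_d)|$ and the denominator with $|J_{d+1}|,|J_d|$ gives
$$
\delta_-(g_2)\cdot\frac{|h(J_d)|}{|J_d|}\ \leq\ \frac{h(\iks{n})-h(s)}{\iks{n}-s}\ \leq\ \frac{1}{\delta_-(g_1)}\cdot\frac{|h(J_d)|}{|J_d|}.
$$
By definition of the Ulam approximation, $|h(J_d)|/|J_d|$ is the slope of $h_{n+d+1}$ on the interval $J_d$ lying immediately to the right of $\iks{n}$, i.e. $|h(J_d)|/|J_d|=h_{n+d+1}'(\iks{n}+)$.

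It remains to replace the \emph{right} slope $h_{n+d+1}'(\iks{n}+)$ by a \emph{left} slope $h_{n+i}'(\iks{n}-)$ up to a bounded factor. A priori these are incomparable, but the carcass condition forces eventual periodicity of the interval lengths. Let $z$ be the number of trailing zeros of $k_n$, so that $\iks{n}=\mu_{n+1-z,k_n/2^z}(g_1)$ with $k_n/2^z$ odd; then the intervals of the level-$(n+1-z)$ partition abutting $\iks{n}$ on the left and on the right are the two children of one level-$(n-z)$ interval, so by Lemma~\ref{lema:2.4} the ratio $h_{n+1-z}'(\iks{n}+)/h_{n+1-z}'(\iks{n}-)$ has the form $\frac{\delta'(1-\delta)}{\delta(1-\delta')}$ with $\delta$ one of the finitely many ratios $\delta(\cdot)$ of $g_1$ and $\delta'$ one of those of $g_2$, hence lies in a fixed compact subset of $(0,\infty)$. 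Above level $n+1-z+n_0$ the address of the interval touching $\iks{n}$ from the right (resp. left) ends in a run of at least $n_0$ zeros (resp. ones), so Remark~\ref{rem:3.5} multiplies \emph{both} $h_m'(\iks{n}+)$ and $h_m'(\iks{n}-)$ by the \emph{same} factor $\delta_0(g_2)/\delta_0(g_1)$ when $m$ grows by one; hence $h_m'(\iks{n}+)/h_m'(\iks{n}-)$ is constant for $m\geq n+1-z+n_0$, and since over the $n_0$ intermediate levels each one-sided slope is multiplied by factors inside $[\delta_-(g_2)/\delta^+(g_1),\,\delta^+(g_2)/\delta_-(g_1)]$, this constant lies in an interval $[c_-,c_+]$ depending only on $g_1,g_2,n_0$. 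Taking $i=\max(d+1,\,n_0+1)$ and using once more that a slope changes by a bounded factor over at most $n_0$ levels, one obtains $h_{n+d+1}'(\iks{n}+)=c\cdot h_{n+i}'(\iks{n}-)$ with $c\in[c_-,c_+]$; together with the displayed inequality this proves the lemma, with explicit $k_-,k^+$ built from $\delta_-(g_1),\delta_-(g_2),\delta^+(g_1),\delta^+(g_2),n_0,c_-,c_+$.

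The step I expect to be the main obstacle is exactly this conversion: the box estimate naturally produces a slope on the \emph{right} of $\iks{n}$, whereas the statement asks for one on the \emph{left}, and without extra input these differ by an unbounded factor; what saves the argument is Remark~\ref{rem:3.5}, which makes a run of equal symbols --- zeros \emph{or} ones --- contribute the single multiplier $\delta_0$, so that after a shift of level the two one-sided slopes decay at one common rate. The hypotheses $x_{n+1}=1$ and ``the next $1$ occurs at position $n+t+2$'' serve to guarantee that $J_1\supset J_2\supset\cdots$ genuinely refines $I_{n+1}(x)$ and that, when $s$ sits deep in this block, the index $i$ may be chosen in the range $n_0\leq i\leq t-n_0+1$ of Remark~\ref{rem:3.9}, where the slopes behave exactly geometrically.
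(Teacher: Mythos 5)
You have proved the lemma as it is literally printed, but that is not the statement the paper's own proof establishes, and the discrepancy matters. In the paper's argument the difference quotient is anchored at the $g_1$-infinite point $x$ itself, not at the grid point $\iks{n}$: the proof sets $k_{SX}=\frac{h(x)-h(s)}{x-s}$, places $s$ in $[\widehat{x}_{n+i+1}^{\,-},\widehat{x}_{n+i+2}^{\,-})\subset[\iks{n},\widehat{x}_{n+1})$ (so $s$ approaches $x$ \emph{from the left}, since $x_{n+1}=1$ forces $x\geq\widehat{x}_{n+1}$), and the conclusion is expressed through $h_{n+i}'(x)$; this is exactly what part (1) of Theorem~\ref{th:4} needs to control $h'(x-)$ at a point $x\notin g_1^{-\infty}(0)$. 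Your reading --- $s$ to the right of $\iks{n}$ and the quotient based at $\iks{n}$ --- follows the printed formula, but it yields a variant of Lemmas~\ref{lema:3.10} and~\ref{lema:3.12} rather than the statement actually used: bounding $\frac{h(\iks{n})-h(s)}{\iks{n}-s}$ for all $s$ says nothing directly about $\frac{h(x)-h(s)}{x-s}$ when both $x$ and $s$ are off the grid. The genuinely hard step of the paper --- trapping a chord neither of whose endpoints lies in $g_1^{-m}(0)$, via the four auxiliary points $s_-,s^+,x_-,x^+$ of~\eqref{eq:3.1}--\eqref{eq:3.2} and the length estimates of Lemma~\ref{lema:3.6} across the run of $t$ zeros --- is absent from your argument; you observe yourself that you never use $x_{n+1}=1$ or $t$, which is the symptom of the misreading.

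That said, as a proof of the printed statement your argument looks essentially sound, and your mechanism for passing from $h_{m}'(\iks{n}+)$ to $h_{m}'(\iks{n}-)$ is correct and genuinely different from anything in the paper: writing $\iks{n}=\mu_{n+1-z,\,k_n/2^z}$ with $k_n/2^z$ odd, the level-$m$ intervals abutting $\iks{n}$ on the right (resp.\ left) eventually have addresses ending in at least $n_0$ zeros (resp.\ ones), so Remark~\ref{rem:3.5} rescales both one-sided slopes by the common factor $\delta_0(g_2)/\delta_0(g_1)$ per level and their ratio stabilizes at a value bounded in terms of $g_1$, $g_2$ and $n_0$ alone. To recover what the paper actually proves, you could keep your nested-left-children sandwich for the chord from $s$ to $\widehat{x}_{n+1}$, but you must additionally compare the chord from $\widehat{x}_{n+1}$ to $x$ with $h_{n+i}'(x)$ and combine the two; it is precisely in controlling $x-\widehat{x}_{n+1}$ and $\widehat{x}_{n+i+2}^{\,+}-\widehat{x}_{n+1}$ against $\lI_{n+1}(g_1)$ that the block of $t$ zeros and parts (i)--(iii) of Lemma~\ref{lema:3.6} enter, and that work is missing from your proposal.
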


\begin{proof}
Since $h$ increase, then for any $s_-,\, s^+,\, x_-$ and $x^+$
such that $$
\left\{ \begin{array}{l} s_-\leq s\leq s^+<x^-\leq x\leq x^+\\
s_-<s^+<x_-<x^+,
\end{array}\right.
$$ we have that
$$
k_{S_-X^+}\leq k_{SX}\leq k_{S^+X_-}, $$ %
where:

1. $k_{SX}$ is the tangent of the line, which connects points $(s,
h(s))$ and $(x, h(x))$;

2. $k_{S_-X^+}$ is the tangent of the line, which connects points
$(s_-, h(s^+))$ and $(x^+, h(x_-))$ and, finally

3. $k_{S^+X_-}$ is the tangent of the line, which connects points
$(s^+, h(s_-))$ and $(x_-, h(x^+))$.

By definitions, $$ k_{SX} =\frac{h(x)-h(s)}{x-s}, $$
$$
k_{S_-X^+} = \frac{h(x_-)-h(s^+)}{x^+-s_-}, $$ and
$$
k_{S^+X_-} = \frac{h(x^+)-h(s_-)}{x_--s^+}.
$$

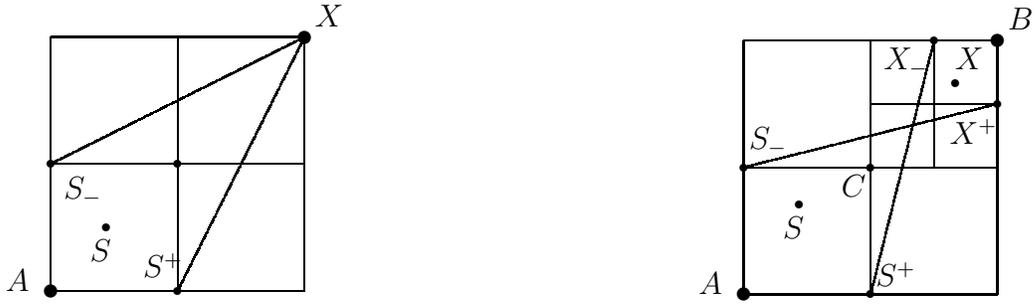
\begin{figure}[htbp]
\begin{minipage}[h]{0.45\linewidth}
\begin{center}
\begin{picture}(110,110)
\put(0,0){\line(0,1){96}} \put(0,0){\line(1,0){96}}

\put(0,48){\line(1,0){96}} 
\put(0,96){\line(1,0){96}}

\put(48,0){\line(0,1){96}} 
\put(96,0){\line(0,1){96}}

\put(0,0){\circle*{5}} \put(96,96){\circle*{5}} \put(-17,0){$A$}
\put(100,100){$X$}


\put(48,48){\circle*{3}} 


\put(21,24){\circle*{3}} \put(15,11){$S$}

\put(48,0){\circle*{3}} \put(35,5){$S^+$}

\put(0,48){\circle*{3}} \put(5,35){$S_-$}

\qbezier(0,48)(48,72)(96,96) \qbezier(48,0)(72,48)(96,96)
\end{picture}
\vskip 3mm \centerline{a) Construction of} \centerline{$S$, $S^+$,
$S_-$ and $X$}\end{center}
\end{minipage}
\hfill
\begin{minipage}[h]{0.45\linewidth}
\begin{center}
\begin{picture}(110,110)
\put(0,0){\line(0,1){96}} \put(0,0){\line(1,0){96}}

\put(0,48){\line(1,0){96}} \put(48,72){\line(1,0){48}}
\put(0,96){\line(1,0){96}}

\put(48,0){\line(0,1){96}} \put(72,48){\line(0,1){48}}
\put(96,0){\line(0,1){96}}

\put(0,0){\circle*{5}} \put(96,96){\circle*{5}} \put(-17,0){$A$}
\put(100,100){$B$}

\put(80,80){\circle*{3}} \put(80,85){$X$}

\put(48,48){\circle*{3}} \put(37,37){$C$}

\put(21,34){\circle*{3}} \put(15,21){$S$}

\qbezier(0,48)(48,60)(96,72) \qbezier(48,0)(60,48)(72,96)

\put(0,48){\circle*{3}} \put(48,0){\circle*{3}}
\put(72,96){\circle*{3}} \put(96,72){\circle*{3}}

\put(2,55){$S_-$}%
\put(50,3){$S^+$}

\put(53,85){$X_-$}%
\put(78,57){$X^+$}

\end{picture}
\vskip 3mm \centerline{b) Case $t =0$}\centerline{in
Lemma~\ref{lema:3.11}}\end{center}
\end{minipage}
\hfill \caption{Proof of Lemmas~\ref{lema:3.10}
and~\ref{lema:3.11}} \label{fig-1}
\end{figure}

If $s\in [\widehat{x}_{n+i+1}^{\, -}, \widehat{x}_{n+i+2}^{\, -})$
for some $i, 0\leq i< t$ then take
\begin{equation}\label{eq:3.1}
\begin{array}{ll}
s_- = \widehat{x}_{n+i+1}^{\, -},& s^+ =\widehat{x}_{n+i+2}^{\, -},\\
x_- = \widehat{x}_{n+1},& x^+ =\widehat{x}_{n+i+2}^{\, +}.
\end{array}
\end{equation}
And if $s\in [\widehat{x}_{n+t+1}^{\, -}, \widehat{x}_{n+1})$ then
take
\begin{equation}\label{eq:3.2}
\begin{array}{ll}
s_- = \widehat{x}_{n+t+1}^{\, -},& s^+ =\widehat{x}_{n+1},\\
x_- = \widehat{x}_{n+t+2},& x^+ =\widehat{x}_{n+t+1}^{\, +}.
\end{array}
\end{equation}

The case $t=0$ is presented at Figure~\ref{fig-1}b. The case $t=1$
is illustrated at Figure~\ref{fig-2}.

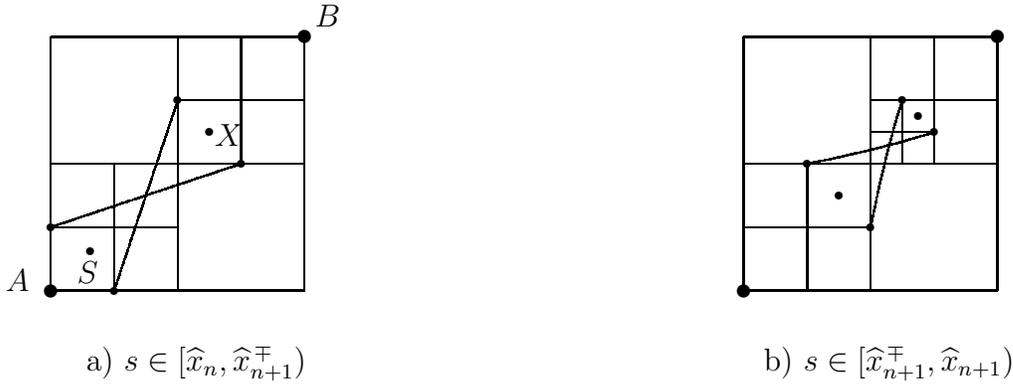
\begin{figure}[htbp]
\begin{minipage}[h]{0.45\linewidth}
\begin{center}
\begin{picture}(110,110)
\put(0,0){\line(0,1){96}} \put(0,0){\line(1,0){96}}

\put(0,48){\line(1,0){96}} \put(48,72){\line(1,0){48}}
\put(0,96){\line(1,0){96}}

\put(48,0){\line(0,1){96}} \put(72,48){\line(0,1){48}}
\put(96,0){\line(0,1){96}}

\put(0,0){\circle*{5}} \put(96,96){\circle*{5}} \put(-17,0){$A$}
\put(100,100){$B$}

\qbezier(24,0)(24,24)(24,48) \qbezier(0,24)(24,24)(48,24)

\put(60,60){\circle*{3}} \put(62,55){$X$}


\put(15,15){\circle*{3}} \put(10,3){$S$}


\qbezier(0,24)(36,36)(72,48) \qbezier(24,0)(36,36)(48,72)
\put(0,24){\circle*{3}} \put(24,0){\circle*{3}}
\put(48,72){\circle*{3}} \put(72,48){\circle*{3}}
\end{picture}
\vskip 3mm \centerline{a) $s\in [\widehat{x}_n,
\widehat{x}_{n+1}^{\, \mp})$}\end{center}
\end{minipage}
\hfill
\begin{minipage}[h]{0.45\linewidth}
\begin{center}
\begin{picture}(110,110)
\put(0,0){\line(0,1){96}} \put(0,0){\line(1,0){96}}

\put(0,48){\line(1,0){96}} \put(48,72){\line(1,0){48}}
\put(0,96){\line(1,0){96}}

\put(48,0){\line(0,1){96}} \put(72,48){\line(0,1){48}}
\put(96,0){\line(0,1){96}}

\put(0,0){\circle*{5}} \put(96,96){\circle*{5}}

\qbezier(24,0)(24,24)(24,48) \qbezier(0,24)(24,24)(48,24)

\put(66,66){\circle*{3}} 


\put(36,36){\circle*{3}} 

\qbezier(60,48)(60,60)(60,72) \qbezier(48,60)(60,60)(72,60)

\qbezier(24,48)(54,54)(72,60) \qbezier(48,24)(54,54)(60,72)
\put(24,48){\circle*{3}} \put(48,24){\circle*{3}}
\put(72,60){\circle*{3}} \put(60,72){\circle*{3}}
\end{picture}
\vskip 3mm \centerline{b) $s\in [\widehat{x}_{n+1}^{\, \mp},
\widehat{x}_{n+1})$}\end{center}
\end{minipage}
\hfill \caption{Case $t=1$ in Lemma~\ref{lema:3.10}} \label{fig-2}
\end{figure}

Suppose that $s\in [\widehat{x}_{n+i+1}^{\, -},
\widehat{x}_{n+i+2}^{\, -})$ for some $i,\, 0\leq i<t$, and that
the numbers $s_-,\, s^+,\, x_-$ and $x^+$ are defined
by~\eqref{eq:3.1}. In order to use Lemma~\ref{lema:3.6}, write
$$ x^+ -s_- =
(\widehat{x}^+_{n+i+2}- \widehat{x}_{n+1}) + (\widehat{x}_{n+1}
-\widehat{x}_{n+i+1}^{\, -})$$ and then we will calculate
restrictions of each summand.

For $0\leq i\leq n_0$ we have
\begin{equation}\label{eq:3.3}
x_- -s^+ = \widehat{x}_{n+1} - \widehat{x}_{n+i+2}^{\, -} =
\widehat{x}_n^+ -\widehat{x}_{n+i+2}^- \leq \lI_{n+1}(g_1).
\end{equation}
Next, by~(i) of Lemma~\ref{lema:3.6},
\begin{equation}\label{eq:3.4}
\widehat{x}_{n+1} -s_- = \lI_{n+i+2}^-(g_1)\geq \lI_{n+1}(g_1)
\cdot (\delta_-(g_1))^{n_0+1}.\end{equation}

If $n_0+1\leq i<t$, then by~(ii) of Lemma~\ref{lema:3.6},
\begin{equation}\label{eq:3.5}
x_- -s^+ \leq \lI_{n+1}(g_1) \cdot (\delta^+(g_1))^{n_0+2}\cdot
(\delta_0(g_1))^{i-n_0}
\end{equation} and
\begin{equation}\label{eq:3.6}
\lI_{n+1}(g_1)\cdot (\delta_-(g_1))^{n_0+2}\cdot
\delta_0^{i-n_0-1}(g_1) \leq \widehat{x}_{n+1} -s_-.
\end{equation}

For $i,\, 0\leq i\leq n_0-2$ it follows from Remark~\ref{rem:3.3}

\begin{equation}\label{eq:3.7}
\lI_{n+1}(g_1)\cdot (\delta_-(g_1))^{n_0}\leq x^+
-\widehat{x}_{n+1} \ .\end{equation}

If $n_0-1\leq i\leq t-n_0-1$ then it follows from~(iii) of
Lemma~\ref{lema:3.6} and Remark~\ref{rem:3.5} that
\begin{equation}\label{eq:3.8}
\lI_{n+1}(g_1)\cdot (\delta_-(g_1))^{n_0}\cdot
(\delta_0(g_1))^{i+2-n_0}\leq x^+ -\widehat{x}_{n+1} \
.\end{equation}

If $t-n_0\leq i<t$, then it follows from~(iii) of
Lemma~\ref{lema:3.6}, and Remark~\ref{rem:3.5} that
\begin{equation}\label{eq:3.9} \lI_{n+1}(g_1)\cdot
(\delta_0(g_1))^{t-2n_0+1}\cdot (\delta_-(g_1))^{2n_0} \ \ \ \leq
x^+ -\widehat{x}_{n+1}.
\end{equation}

If $0\leq i\leq n_0-2$ then if follows from~\eqref{eq:3.4}
and~\eqref{eq:3.7} that
\begin{equation}\label{eq:3.10}\lI_{n+1}(g_1)\cdot 2\cdot
(\delta_-(g_1))^{n_0+1}\leq x^+ -s_-.
\end{equation}

If $n_0-1\leq i\leq n_0$ then it follows from~\eqref{eq:3.4}
and~\eqref{eq:3.8} that
\begin{equation}\label{eq:3.11}\lI_{n+1}(g_1)\cdot 2\cdot
(\delta_-(g_1))^{n_0+1}\cdot \delta_0^{i+2-n_0}\leq x^+ -s_-.
\end{equation}

If $n_0+1\leq i\leq t-n_0-1$ then, by~\eqref{eq:3.6}
and~\eqref{eq:3.8} obtain
\begin{equation}\label{eq:3.12}
\lI_{n+1}(g_1)\cdot 2\cdot (\delta_-(g_1))^{n_0+2}\cdot
\delta_0^{i+2-n_0}\leq x^+ -s_-.
\end{equation}

If $t-n_0 \leq i\leq t-1$ then, by~\eqref{eq:3.6}
and~\eqref{eq:3.9} obtain \begin{equation}\label{eq:3.13}
\lI_{n+1}(g_1)\cdot 2 \cdot (\delta_-(g_1))^{2n_0} \cdot
(\delta_0(g_1))^{t-n_0} \leq x^+ -s_- \ .
\end{equation}

If $0\leq i\leq n_0-2$ then, by~\eqref{eq:3.3} and~\eqref{eq:3.10}

\begin{equation}\label{eq:3.14}
K_{S_-X^+} \geq \frac{\lI_{n+1}(g_2)\cdot 2\cdot
(\delta_-(g_2))^{n_0+1}}{\lI_{n+1}(g_1)}
\end{equation} and \begin{equation}\label{eq:3.15}
K_{S^+X_-} \leq \frac{\lI_{n+1}(g_2)}{\lI_{n+1}(g_1)\cdot 2\cdot
(\delta_-(g_1))^{n_0+1}}.
\end{equation}

If $n_0-1\leq i\leq n_0$ then by~\eqref{eq:3.3}
and~\eqref{eq:3.11}
\begin{equation}\label{eq:3.16}
K_{S_-X^+} \geq \frac{\lI_{n+1}(g_2)\cdot 2\cdot
(\delta_-(g_2))^{n_0+1}\cdot
(\delta_0(g_2))^{i+2-n_0}}{\lI_{n+1}(g_1)}
\end{equation} and \begin{equation}\label{eq:3.17}
K_{S^+X_-} \leq \frac{\lI_{n+1}(g_2)}{\lI_{n+1}(g_1)\cdot 2\cdot
(\delta_-(g_1))^{n_0+1}\cdot (\delta_0(g_1))^{i+2-n_0}}.
\end{equation}

If $n_0+1\leq i\leq t-n_0-1$ then by~\eqref{eq:3.5}
and~\eqref{eq:3.12} \begin{equation}\label{eq:3.18} K_{S_-X^+}
\geq \frac{\lI_{n+1}(g_2)\cdot 2\cdot (\delta_-(g_2))^{n_0+2}\cdot
(\delta_0(g_2))^{i+2-n_0}}{\lI_{n+1}(g_1)\cdot
(\delta^+(g_1))^{n_0+2}\cdot (\delta_0(g_1))^{i-n_0}}
\end{equation} and \begin{equation}\label{eq:3.19}
K_{S^+X_-} \leq \frac{\lI_{n+1}(g_2)\cdot
(\delta^+(g_2))^{n_0+2}\cdot
(\delta_0(g_2))^{i-n_0}}{\lI_{n+1}(g_1)\cdot 2\cdot
(\delta_-(g_1))^{n_0+2}\cdot (\delta_0(g_1))^{i+2-n_0}}.
\end{equation}

If $t-n_0 \leq i\leq t-1$ then by~\eqref{eq:3.5}
and~\eqref{eq:3.13}
\begin{equation}\label{eq:3.20}
K_{S_-X^+} \geq \frac{\lI_{n+1}(g_2)\cdot 2 \cdot
(\delta_-(g_2))^{2n_0} \cdot
(\delta_0(g_2))^{t-n_0}}{\lI_{n+1}(g_1)\cdot
(\delta^+(g_1))^{n_0+2}\cdot (\delta_0(g_1))^{i-n_0}}
\end{equation} and \begin{equation}\label{eq:3.21}
K_{S^+X_-} \leq \frac{\lI_{n+1}(g_2)\cdot
(\delta^+(g_2))^{n_0+2}\cdot
(\delta_0(g_2))^{i-n_0}}{\lI_{n+1}(g_1)\cdot 2 \cdot
(\delta_-(g_1))^{2n_0} \cdot (\delta_0(g_1))^{t-n_0}}.
\end{equation}
Due %
to Remark~\ref{rem:3.7}, it follows from~\eqref{eq:3.14},
\eqref{eq:3.15}, \eqref{eq:3.16} and~\eqref{eq:3.17} that for any
$i,\, 0\leq i\leq n_0$
$$
K_{S_-X^+}\geq h_{n+1}'(x)\cdot 2\cdot
(\delta_-(g_2))^{n_0+1}\cdot (\delta_0(g_2))^2
$$
and
$$
K_{S^+X_-}\leq \frac{h_{n+1}'(x)}{ 2\cdot
(\delta_-(g_1))^{n_0+1}\cdot (\delta_0(g_1))^2}.
$$

By Remark~\ref{rem:3.8}, \begin{equation}\label{eq:3.22}\left(
\frac{\delta_-(g_2)}{\delta^+(g_1)}\right)^{n_0} \cdot
h_{n+1}'(x)\leq h_{n+n_0}'(x)\leq \left(
\frac{\delta^+(g_2)}{\delta_-(g_1)}\right)^{n_0} \cdot
h_{n+1}'(x).\end{equation}

If $n_0+1\leq i\leq t-n_0-1$, then it follows from~\eqref{eq:3.18}
that
$$ K_{S_-X^+} \geq \frac{\lI_{n+1}(g_2)\cdot 2\cdot (\delta_-(g_2))^{n_0+2}\cdot
(\delta_0(g_2))^{i+2-n_0}}{\lI_{n+1}(g_1)\cdot
(\delta^+(g_1))^{n_0+2}\cdot
(\delta_0(g_1))^{i-n_0}}\stackrel{\text{by
Rem.~\ref{rem:3.7}}}{=}$$$$ = h_{n+1}'(x)\cdot \left(
\frac{\delta^+(g_2)}{\delta_-(g_1)}\right)^{n_0}\cdot \frac{2\cdot
(\delta_-(g_2))^{n_0+2}\cdot
(\delta_0(g_2))^{i+2-n_0}}{(\delta^+(g_1))^{n_0+2}\cdot
(\delta_0(g_1))^{i-n_0}}\cdot \left(
\frac{\delta_-(g_1)}{\delta^+(g_2)}\right)^{n_0}\stackrel{\text{by
Rem.~\ref{rem:3.9}}}{=}
$$$$
= h'_{n_0+n}(x)\cdot \left(
\frac{\delta_0(g_2)}{\delta_0(g_1)}\right)^{i-n_0}\cdot
\frac{2\cdot (\delta_-(g_2))^2\cdot
(\delta_0(g_2))^2}{(\delta^+(g_1))^2} \cdot \left(
\frac{\delta_-(g_1)\cdot \delta_-(g_2)}{\delta^+(g_1)\cdot
\delta^+(g_2)}\right)^{n_0}\stackrel{\text{by~\eqref{eq:3.22}}}{\geq}
$$$$
\geq h'_{n+i}(x)\cdot \frac{2\cdot (\delta_-(g_2))^2\cdot
(\delta_0(g_2))^2}{(\delta^+(g_1))^2} \cdot \left(
\frac{\delta_-(g_1)\cdot \delta_-(g_2)}{\delta^+(g_1)\cdot
\delta^+(g_2)}\right)^{n_0}.
$$ Analogously, $$K_{S^+X_-}
\stackrel{\text{by~\eqref{eq:3.19}}}{\leq}
 h_{n+1}'(x)\cdot \frac{ (\delta^+(g_2))^{n_0+2}\cdot
(\delta_0(g_2))^{i-n_0}}{ 2\cdot (\delta_-(g_1))^{n_0+2}\cdot
(\delta_0(g_1))^{i+2-n_0}}\stackrel{\text{by
Rem.~\ref{rem:3.7}}}{=}
$$$$ = h_{n+1}'(x)\cdot \left( \frac{\delta_-(g_2)}{\delta^+(g_1)}\right)^{n_0}
\cdot \frac{ (\delta^+(g_2))^{n_0+2}\cdot
(\delta_0(g_2))^{i-n_0}}{ 2\cdot (\delta_-(g_1))^{n_0+2}\cdot
(\delta_0(g_1))^{i+2-n_0}}\cdot \left(
\frac{\delta^+(g_1)}{\delta_-(g_2)}\right)^{n_0}\stackrel{\text{by
Rem.~\ref{rem:3.9}}}{=}$$
$$ =h_{n_0+n}'(x)\cdot \left( \frac{\delta_0(g_2)}{\delta_0(g_1)}\right)^{i-n_0}\cdot
\frac{ (\delta^+(g_2))^{n_0+2}}{ 2\cdot
(\delta_-(g_1))^{n_0+2}\cdot (\delta_0(g_1))^2}\cdot \left(
\frac{\delta^+(g_1)}{\delta_-(g_2)}\right)^{n_0}
\stackrel{\text{by~\eqref{eq:3.22}}}{\geq}
$$$$
\leq h'_{n+i}(x)\cdot \frac{ (\delta^+(g_2))^2}{2\cdot
(\delta_0(g_1))^2\cdot (\delta_-(g_2))^2} \cdot \left(
\frac{\delta^+(g_1)\cdot \delta^+(g_2)}{\delta_-(g_1)\cdot
\delta_-(g_2)}\right)^{n_0}.
$$

If $t-n_0\leq i\leq t-1$, then it follows from~\eqref{eq:3.20}
\eqref{eq:3.21}, \eqref{eq:3.22} and Remarks~\ref{rem:3.7}
and~\ref{rem:3.9} that
$$ K_{S_-X^+}
\geq h_{n+i}'(x)\cdot \frac{ 2 \cdot (\delta_-(g_2))^{n_0} }{
(\delta^+(g_1))^2}\cdot \left(  \frac{\delta_-(g_1)\cdot
\delta_-(g_2)\cdot \delta_0(g_2)}{\delta^+(g_1)\cdot
\delta^+(g_2)}\right)^{n_0}
$$
and
$$ K_{S^+X_-}
\leq h_{n+i}'(x)\cdot\frac{ (\delta^+(g_2))^2}{ 2 \cdot
(\delta_-(g_1))^{n_0}}\cdot \left( \frac{ \delta^+(g_2)\cdot
\delta^+(g_1)}{ \delta_-(g_1) \cdot \delta_-(g_2)\cdot
\delta_0(g_1)}\right)^{n_0}.
$$ This proves the lemma for the case~\eqref{eq:3.1}.

The case~\eqref{eq:3.2} can be considered analogously.
\end{proof}

In the same manner as in Lemmas~\ref{lema:3.10}
and~\ref{lema:3.11} we can prove the following lemma.

\begin{lemma}\label{lema:3.12}
Suppose that $x<1$. Then there exist $k_-$ and $k^+$, independent
on~$x$, such that for any $n\geq n_0$ and $ s\in (\widehat{x}_n,\,
\widehat{x}_n^{\, +}]$ there exits $i\geq 1$ such that
$$
k_-\cdot h_{n+i}'(\widehat{x}_n+)  \leq
\frac{h(\widehat{x}_n)-h(s)}{\widehat{x}_n-s} \leq k^+\cdot
h_{n+i}'(\widehat{x}_n+)\ .
$$
\end{lemma}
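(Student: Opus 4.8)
The statement is the right-sided analogue of Lemma~\ref{lema:3.11}, so the plan is to mirror that argument, but working on the right of $\widehat{x}_n$ instead of the left, and without the standing hypothesis $x_{n+1}=1$ — it suffices that $x<1$, so that $\widehat{x}_n^{\,+}$ is well defined and $(\widehat{x}_n,\widehat{x}_n^{\,+}]$ is a genuine interval for all large $n$. First I would fix $n\geq n_0$ and a point $s\in(\widehat{x}_n,\widehat{x}_n^{\,+}]$, and locate $s$ in the nested dyadic subdivision of the interval $I_{n+1,k_n}=(\widehat{x}_n,\widehat{x}_n^{\,+})$: there is a unique $j\geq 0$ with $s\in[\widehat{x}_{n+j+1},\widehat{x}_{n+j+1}^{\,+})$ in the ``left-most'' chain of refinements, or $s$ lands in the rightmost cell, exactly the dichotomy of~\eqref{eq:3.1}--\eqref{eq:3.2} with the roles of the endpoints reflected. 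The monotonicity trick is the same: since $h$ increases, squeezing $s$ between two consecutive kinks $s_-\leq s\leq s^+$ and comparing with the endpoints $x_-\leq x=\widehat{x}_n\leq x^+$ gives $k_{S_-X^+}\leq k_{SX}\leq k_{S^+X_-}$, where now $x_-=\widehat{x}_n$ itself is one of the endpoints. (This is the reason the right-hand side behaves even slightly better than in Lemma~\ref{lema:3.11}: the ``moving'' endpoint is only on the $s$-side.)

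Next I would estimate the lengths $x^+-s_-$, $x^+-\widehat{x}_n$ and $\widehat{x}_n-s_-$ (equivalently $s^+-\widehat{x}_n$) from above and below by powers of $\delta_-(g_1)$, $\delta^+(g_1)$ and $\delta_0(g_1)$, using Remark~\ref{rem:3.3} for the crude two-sided bound and Lemma~\ref{lema:3.6} (parts (i)--(iii)) together with Remark~\ref{rem:3.5} for the sharper geometric-series bounds once the relevant block of $g$-digits has stabilised. These are literally the computations \eqref{eq:3.3}--\eqref{eq:3.13} rewritten with ``$+$'' and ``$-$'' swapped, so I would not grind through them; the point is that in every one of the finitely many digit-configurations the ratio of the two comparison lengths involving $g_1$ and $g_2$ is, up to a factor $(\delta_0(g_2)/\delta_0(g_1))^{\,j-\text{const}}$ and a bounded constant, exactly $h'_{n+i}(\widehat{x}_n+)$ for a suitable $i$ — here Remark~\ref{rem:3.7} identifies the finite-difference quotient $\big(h(\widehat{x}_m^{\,+})-h(\widehat{x}_m)\big)/(\widehat{x}_m^{\,+}-\widehat{x}_m)$ with $h_{m+1}'(x)$, and Remarks~\ref{rem:3.8}--\ref{rem:3.9} absorb the shift between $n+1$, $n+n_0$ and $n+i$ into the universal constants $k_-,k^+$. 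Because $\mathcal{D}(g_1)$ and $\mathcal{D}(g_2)$ are finite, $\delta_-,\delta^+,\delta_0$ for each map take finitely many values, so the collection of constants that appear is finite and can be replaced by a single worst pair $k_-,k^+$ independent of $x$, $n$ and $s$ — which is exactly the assertion.

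The main obstacle, as in Lemma~\ref{lema:3.11}, is bookkeeping rather than any genuinely new idea: one must check that in \emph{every} case of the dichotomy (the analogues of the ranges $0\leq i\leq n_0-2$, $n_0-1\leq i\leq n_0$, $n_0+1\leq i\leq t-n_0-1$, $t-n_0\leq i\leq t-1$, and the two boundary cases \eqref{eq:3.1}/\eqref{eq:3.2}) the exponents of $\delta_0(g_2)/\delta_0(g_1)$ match up so that Remark~\ref{rem:3.9} can be applied with the correct index $i$, leaving only bounded powers of $\delta_-,\delta^+$ behind. Since the statement only needs boundedness of $k_-$ away from $0$ and of $k^+$ away from $\infty$ — not optimal constants — it is enough to take $k_-$ to be the minimum over the finitely many cases of the crude lower bounds produced above and $k^+$ the maximum of the crude upper bounds, and the lemma follows. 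The reflected cases \eqref{eq:3.2} (the ``rightmost cell'') are handled verbatim as in the proof of Lemma~\ref{lema:3.11}.
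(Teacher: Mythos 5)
Your plan coincides with the paper's: the paper gives no separate argument for Lemma~\ref{lema:3.12}, stating only that it is proved ``in the same manner as in Lemmas~\ref{lema:3.10} and~\ref{lema:3.11}'', and your outline --- locate $s$ in the nested chain of refinements approaching $\widehat{x}_n$ from the right, sandwich the difference quotient between $k_{S_-X^+}$ and $k_{S^+X_-}$ by monotonicity of $h$, bound the relevant lengths via Remarks~\ref{rem:3.3} and~\ref{rem:3.5} and Lemma~\ref{lema:3.6}, and convert to $h_{n+i}'(\widehat{x}_n+)$ via Remarks~\ref{rem:3.7}--\ref{rem:3.9}, with uniformity of $k_-$ and $k^+$ coming from the finiteness of $\mathcal{D}$ --- is exactly that adaptation. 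Your sketch is, if anything, more explicit than what the paper provides for this lemma.
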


For every $n>n_0-1$, $k,\, 0\leq k<2^{n-1}$ and $x\in
I_{n,k}(g_1)$ denote by $\widehat{h}_{n,x}:\, [0, 1]\rightarrow
[0, 1]$ the piecewise linear map, such that all its kinks belong
to the set $A_{n,x} =g_1^{-n}(0)\cup \{x\}$ and
$\widehat{h}_{n,x}(t) = h(t)$ for all $t\in A_{n,x}$. Write $$
\Delta_L(I_{n,k}(g_1),t) =
\frac{\widehat{h}_{n,x}'(t-)}{h_n'(t)},\ \
\Delta_R(I_{n,k}(g_1),t) =
\frac{\widehat{h}_{n,x}'(t+)}{h_n'(t)},$$ and
$$\Delta(I_{n,k}(g_1),t) = \left\{
 \Delta_L(I_{n,k}(g_1),t); \Delta_R(I_{n,k}(g_1),t)\right\},
$$ see Fig.~\ref{fig-3}.

\begin{remark}\label{rem:3.13}
Notice that the following conditions are equivalent:

1. $\Delta_L(I_{n,k}(g_1), t) =1$;

2. $\Delta_R(I_{n,k}(g_1), t) =1$;

3. The point $(t, h(t))$ belongs to the graph of $h_n$.
\end{remark}

Since $g_1$ is linear on $I_{n,k}(g_1)$, and $g_2$ is linear on
$I_{n,k}(g_2)$, then $$ \Delta(I_{n,k}(g_1),t) =
\Delta(g_1(I_{n,k}(g_1)),g_1(t)),
$$ whence, by induction, \begin{equation}\label{eq:3.23}
\Delta(I_{n,k}(g_1),t) =
\Delta(g_1^{n-n_0+1}(I_{n,k}(g_1)),g_1^{n-n_0+1}(t))
\end{equation}

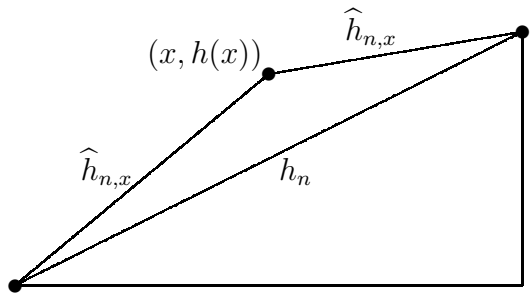
\begin{figure}[htbp]
\begin{center}
\begin{picture}(180,110)
 \put(0,0){\line(1,0){192}}


\put(192,0){\line(0,1){96}}

\put(0,0){\circle*{5}} \put(192,96){\circle*{5}}
\put(96,80){\circle*{5}} \put(50,84){$(x,h(x))$}
\put(100,40){$h_n$} \put(25,40){$\widehat{h}_{n,x}$}
\put(125,93){$\widehat{h}_{n,x}$}

\qbezier(0,0)(96,48)(192,96)

\qbezier(0,0)(48,40)(96,80) \qbezier(96,80)(144,88)(192,96)

\end{picture}
\end{center}
\caption{Graph of $\widehat{h}_{n,x}$ on $I_{n,k_{n-1}}$}
\label{fig-3}
\end{figure}

We are now ready to proof Theorem~\ref{th:4}.

\begin{proof}[Proof of Theorem~\ref{th:4}]
Part~(i) of Theorem~\ref{th:4} follows from Lemmas~\ref{lema:3.11}
and~\ref{lema:3.12}.

We will proof part (ii). Let $x$ has $g_1$-expansion $(x_i)_{i\geq
0}$ and $(k_n)_{n\geq 1}$ be a maximal number, such that $x\in
\overline{I_{n+1,k_n}}$ for all $n\geq 1$.

Suppose that $x\notin g_1^{-\infty}(0)$. If the derivative $h'(x)$
exists, then all $h_n'(x)$, $\widehat{h}_{n,x}'(x-)$ and
$\widehat{h}_{n,x}'(x-)$ tend to $h'(x)$. Moreover, if $h'(x)$ is
finite, then $\Delta_L(g_1(I_{n,k}(g_1)),g_1(x))$ and
$\Delta_R(g_1(I_{n,k}(g_1)),g_1(x))$ tend to $1$. Now the theorem
follows from Remark~\ref{rem:3.13} and~\eqref{eq:3.23}.

If $x=\mu_{n,k}(g_1)$ for some $k\in \{0,\ldots, 2^{n-1}-1\}$,
then existence and finiteness of the derivative $h'(x)$ implies
that $\lim\limits_{n \rightarrow \infty}\sup\limits_{t\in
I_{n+1,k_n}}\Delta_L(g_1(I_{n,k}(g_1)),g_1(t)) =1$ and, again we
are done by Remark~\ref{rem:3.13} and~\eqref{eq:3.23}.

The last case, $x =1$, follows from Remark~\ref{rem:3.13}
and~\eqref{eq:3.23}, if notice that $$\lim\limits_{n \rightarrow
\infty}\sup\limits_{t\in
I_{n+1,k_n}}\Delta_R(g_1(I_{n,k}(g_1)),g_1(t)) =1.$$
\end{proof}

We shall now prove Theorem~\ref{th:5}. This proof will be
constructive, i.e. we will give an explicit example of a map,
whose existence is mentioned in the theorem.

\begin{proof}[Theorem~\ref{th:5}]
The proof will be constructive. Suppose that $g$ is a piecewise
linear unimodal map, whose graph extends linearly $(0, 0)$ to
$(1/5, 1/2)$ to $(1/2, 1)$ to $(1, 0)$.

By construction, this $g$ is a carcass map. By Theorem~\ref{th:3},
the map $g$ is topologically conjugated to the tent map.

Denote $x^*$ the positive fixed point of $g$. Let $(k_n)_{n\geq
0}$ be such that $x^*\in I_{n+1,k_n}(g)$ for all $n\geq 0$. Since
$x^* = \lim\limits_{n\rightarrow \infty}g_r^{-n}(0)$, then
$I_{n+1,k_n} = g_r^{-n}([0, 1])$, whence
\begin{equation}\label{eq:3.24}
\left\{\begin{array}{l}
\mu_{n+1,k_n}(g) = \mu_{n+1,k_n}(f)\\
\mu_{n+1,k_n+1}(g) = \mu_{n+1,k_n+1}(f)
\end{array}\right.
\end{equation} for %
all $n\geq 1$. Now~\eqref{eq:3.24} means that $h_n'(x^*)=1$ for
all $n\geq 1$.

From another hand, it immediately follows from the equality $g(0)
= (h\circ f\circ h^{-1})(0)$ that $g'(0) =2$, whenever $h$ is
piecewise linear. The latter fact gives the contradiction, which
finishes the proof.
\end{proof}

\setlength{\unitlength}{1pt}

\pagestyle{empty}
\bibliography{Ds-Bib}{}
\bibliographystyle{makar}


\end{document}